\theoremstyle{plain}
\newtheorem{theorem}[equation]{Theorem}
\newtheorem{corollary}[equation]{Corollary}
\newtheorem{lemma}[equation]{Lemma}
\newtheorem{proposition}[equation]{Proposition}
\theoremstyle{definition}
\newtheorem{remark}[equation]{Remark}
\newcommand{\IC}{\mathbb{C}}
\newcommand{\IR}{\mathbb{R}}
\def\d/{/\mspace{-6.0mu}/}
\newcommand{\db}{\bar{\partial}}
\newcommand{\p}{\partial}
\begin{document}

\title{Geometry of minimal energy Yang--Mills connections}
\author{Mark Stern}

\footnotetext{Duke University, Department of Mathematics;  
e-mail:  stern@math.duke.edu. The author was partially supported by NSF grant DMS-0504890.}
\date{}

\maketitle
\setcounter{section}{0}
\section{Introduction}

Let $G$ be a compact Lie group and $E$ a principal $G-$bundle on a complete oriented Riemannian manifold, $M$. Let $A$ denote a connection on $E$ and $\nabla_A$ the associated covariant derivative on the adjoint bundle, $ad(E)$. The Yang-Mills energy of $A$ is 
$$YM(A):= \|F_A\|^2,$$
where $F_A$ denotes the curvature of $A$.  In four dimensions, $F_A$ decomposes into its self-dual and anti-self-dual components,
$$F_A = F_A^+ + F_A^-,$$
where $F_A^{\pm}$ denotes the projection onto the $\pm 1$ eigenspace of the Hodge star operator. A connection is called self-dual (respectively anti-self-dual) if $F_A = F_A^+$ (respectively $F_A = F_A^-$). A connection is called an {\em instanton} if it is either self-dual or anti-self-dual. An instanton is always a minimizer of the Yang-Mills energy on a compact oriented 4 manifold. This leads to the converse question: in four dimensions are local minima for the Yang-Mills energy necessarily instantons? The answer to this naive question has long been known to be no. (See  \cite{bls} and \cite{bl}). Partial positive results for low rank $G$, however,  were obtained by Bourguignon, Lawson, and Simons in \cite{bls} and \cite{bl}, where they use a variational argument to show that if $G= SU(2)$ or $SU(3)$, and $M$ is a compact oriented 4 dimensional homogeneous space, then the curvature, $F_A$, is self-dual, anti-self-dual, or abelian. In this note, we settle this converse question in four dimensions for nonnegatively curved homogeneous manifolds and offer related weaker results for special geometries in higher dimensions.

Our main result is the following theorem.
\begin{theorem}\label{thm1}
Let $E$ be a principal $G-$bundle on a compact oriented homogeneous Riemannian four manifold, $M$.
Let $A$ be a Yang-Mills minimizing connection on $E$.
The adjoint bundle, $ad(E)$, contains two $\nabla_A$-stable subbundles, $k^+$ and $k^-$, satisfying
$F_A^{\pm}$ is a section of $\Lambda^2T^*M\otimes k^{\pm}$,
$$[k^+,k^-] = 0,$$
and the curvature of $k^+$ is self-dual and that of $k^-$ is anti-self-dual.
If $M$ is assumed to be nonnegatively curved instead of compact, then the same result holds provided that we also assume that for each $a\geq 0$, $\nabla_A^aF_A\in L^2\cap L^4$. 
\end{theorem}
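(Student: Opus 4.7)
The plan is to generalize the Bourguignon--Lawson--Simons second-variation technique, using the abundance of Killing vector fields on the homogeneous manifold $M$ to promote the integrated inequalities coming from minimality into pointwise algebraic constraints on $F_A$, and then to exploit the four-dimensional Hodge splitting to split $F_A$ into two commuting pieces.

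First, for each Killing field $X$ on $M$, the $ad(E)$-valued 1-form $\omega_X := \iota_X F_A$ generates a smooth family $A_t = A + t\omega_X$. Minimality gives $\frac{d^2}{dt^2}\big|_{t=0} YM(A_t) \geq 0$, and after invoking $d_A F_A = 0$ and $d_A^* F_A = 0$ this reduces to an inequality of the form
$$\int_M \left( \|\nabla_X F_A\|^2 + \langle F_A,\, [\iota_X F_A,\, \iota_X F_A]\rangle \right) \geq 0,$$
in which the derivative term is controlled by the standard Bochner--Weitzenb\"ock formula for Yang--Mills fields and the bracket term is the essential obstruction. Summing over a frame of Killing fields --- which by homogeneity spans $T_pM$ at every point $p$ --- converts this into an inequality of pointwise quantities integrated against the volume form.

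Next, I would insert the four-dimensional decomposition $F_A = F_A^+ + F_A^-$ into the bracket term and use the pointwise orthogonality of $\Lambda^2_+$ and $\Lambda^2_-$, together with identities such as $\alpha^+\wedge\beta^- = 0$, to isolate a cross-contribution proportional to $\langle F_A,\, [\iota_X F_A^+,\, \iota_X F_A^-]\rangle$. Summing over the Killing frame turns this into a pointwise multiple of $|[F_A^+, F_A^-]|^2$. The Bianchi and Yang--Mills equations together give separate Bochner inequalities for $F_A^+$ and $F_A^-$, and pinching the cross-term between these two inequalities, read in opposite directions, forces $[F_A^+, F_A^-] = 0$ pointwise. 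Iterating the same construction with the variations $\iota_X(\nabla_A^a F_A)$ propagates the commutation to iterated covariant derivatives, so that $\nabla_A^a F_A^+$ and $\nabla_A^b F_A^-$ commute for all $a,b \geq 0$.

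Finally, I would define $k^\pm_p$ as the Lie subalgebra of $ad(E)_p$ generated by the values at $p$ of $\nabla_A^a F_A^\pm$ for all $a \geq 0$. The previous step yields $[k^+, k^-] = 0$ automatically, $\nabla_A$-invariance is built into the construction, and local constancy of rank (needed for $k^\pm$ to be an honest subbundle rather than a measurable distribution) follows from a holonomy argument together with the Yang--Mills equations. Once $[k^+, k^-] = 0$ is known, the bracket action of $F_A^-$ on $k^+$ vanishes, so the curvature of the restricted connection on $k^+$ equals the bracket action of $F_A^+$, which is self-dual; the statement for $k^-$ is symmetric. In the noncompact nonnegatively curved case, the hypothesis $\nabla_A^a F_A \in L^2 \cap L^4$ for all $a$ is exactly what is needed to justify the integrations by parts above with no boundary contribution, while nonnegative curvature of $M$ keeps the Bochner--Weitzenb\"ock curvature term favorably signed. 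The main obstacle is the algebraic bookkeeping of the middle step: one must identify the precise four-dimensional identity that, after averaging over Killing fields, isolates $|[F_A^+, F_A^-]|^2$ with the correct sign, and this is where both the dimension-four hypothesis (through $\Lambda^2 = \Lambda^2_+ \oplus \Lambda^2_-$) and the homogeneity of $M$ enter decisively.
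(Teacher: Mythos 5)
Your overall strategy (second variation with test forms built from $F_A$ contracted with Killing fields, then generating $k^\pm$ from covariant derivatives of $F_A^\pm$) is the right one, but the central mechanism you propose for extracting $[F_A^+,F_A^-]=0$ does not work as stated, and this is where the real content lies. First, with the full energy $YM$ the second variation contains the nonnegative term $\|d_A\psi\|^2$, so you cannot conclude that the bracket term is $\le 0$; the fix is to observe that $\|F_A^+\|^2-\|F_A^-\|^2$ is topological, so a minimizer of $YM$ minimizes $\|F_A^-\|^2$ separately, and to take $\psi = i_XF_A^+$, for which $P^-d_A\psi = -P^-i_Xd_AF_A^+ + L_XP^-F_A^+ = 0$; only then does the inequality $\langle F_A^-, i_XF_A^-\wedge i_XF_A^+\rangle\le 0$ hold for each individual Killing field. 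Second, the sum of these quantities over an orthonormal Killing frame is \emph{identically zero pointwise}, because $\sum_a i_{e_a}f^-\wedge i_{e_a}f^-$ is anti-self-dual and hence orthogonal to $F_A^+$ --- it is not a positive multiple of $|[F_A^+,F_A^-]|^2$ as you claim. What one actually deduces is only that each scalar $\langle F_A^-, i_XF_A^-\wedge i_XF_A^+\rangle$ vanishes, which is far weaker than the matrix identity $[F^+_{st},F^-_{ij}]=0$; to bridge that gap one needs a second perturbation $A+t\psi+t^pw$ with $1<p<2$ and $w$ arbitrary, which upgrades the vanishing of the cubic term to $e^*(i_XF_A^+)F_A^-=0$. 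No Bochner pinching is involved at this stage.

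The higher-order step and the bundle step also have gaps. The test form $i_X\nabla_A^aF_A^+$ does not satisfy $P^-d_A\psi=0$, so your proposed iteration does not produce a usable inequality; the argument instead requires the exponentiated variation $A + i_X\int_0^t\phi_{X,s}^*F_A^+\,ds$, whose curvature has the exact form $F_A^- + \phi_{X,t}^*F_A^+ - 2\int_0^t\int_0^s\phi_{X,u}^*i_XF_A^-\wedge i_X\phi_{X,s}^*F_A^+\,du\,ds$. One Taylor-expands this in $t$, averages the leading coefficient over the unit sphere of the isometry algebra (using the Yang--Mills equation and the Weitzenb\"ock identity to kill index-contracted terms), and then needs a separate, genuinely nontrivial algebraic proposition to pass from the contracted identity $\sum_s[\nabla_k^NF^+_{ks},F^-_{st}]=0$ to the full commutation $[\nabla^NF^+_{si},F^-_{tk}]=0$ via the Bianchi, Yang--Mills, and homogeneity relations; your sketch does not address this passage at all. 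Finally, constancy of the rank of $k^\pm_x$ does not follow from a holonomy argument (the fibers are spanned by values of specific sections, whose span could a priori jump in dimension); it is obtained from real-analyticity of $A$ in Coulomb gauge on a real-analytic homogeneous manifold, so that a wedge of generators vanishing to infinite order at a point must vanish identically.
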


Our proof of Theorem \ref{thm1}  extends the variational argument of Bourguignon, Lawson, and Simons. Let $A_t$ be a smooth family of connections on $E$ with $A_0 = A.$ The assumption that $A$ is a local minimum of the Yang-Mills energy implies the variational inequality
\begin{equation}
\frac{d^2}{dt^2}YM(A_t)_{|t=0} \geq 0.
\end{equation}
The proof relies on choosing useful families of test connections with $A_t-A$ constructed from $F_A$. In \cite{bl}, the test connection $A_t = A+ti_XF_A^+$ was used, where $i_X$ denotes interior multiplication by the vector field $X$, and $X$ runs over a basis of Killing vector fields.  Our results rely on recognizing this variation as only the first term in an infinite family of related variations. 

In the absence of Killing vector fields, the search for natural constructions of test connections leads one to consider special geometries where there exist natural maps 
$\Phi:\bigwedge^2T^*M\otimes ad(E)\rightarrow \bigwedge^1T^*M\otimes ad(E).$ In this case, we can consider variations with $\frac{dA}{dt}(0) = \Phi(F_A)$ and seek additional results.  Covariant constant $3$-forms induce natural maps from $2$-forms to $1$-forms. Hence, one expects new results for $G_2$ manifolds, Calabi-Yau 3 folds, and oriented 3 dimensional manifolds.  We treat the latter two in this note. We do not address the question of existence of minimizing connections in higher dimensions. Simons (see \cite{bl}) proved the nonexistence of nonflat Yang-Mills minimizing connections on $S^n$, $n>4$. This nonexistence result has subsequently been generalized in many directions; see, for example, \cite{kot}, \cite{op},\cite{p}, \cite{sh}, and \cite{x}.

On a Kahler $m-$fold with Kahler form $\omega$ the curvature decomposes as 
$$F_A = F_A^{2,0} + F_{A0}^{1,1} + \frac{1}{m}(\Lambda F_A)\omega + F_A^{0,2},$$
where $\Lambda$ denotes the adjoint of exterior multiplication by $\omega$, 
and $F_{A0}^{1,1} = F_{A}^{1,1} - \frac{1}{m}(\Lambda F_A)\omega.$ 
\begin{theorem}
Let $E$ be a principal $G-$bundle on a complete Calabi-Yau 3 fold. 
Let $A$ be a Yang-Mills minimizing connection on $E$. If $M$ is noncompact, assume further that
$F_A\in L^4$. 
Then $F_A^{0,2}$ takes values in a commutative subbundle of $ad(E)$. 
\end{theorem}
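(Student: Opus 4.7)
The plan is to extend the Bourguignon--Lawson--Simons variational argument to the Calabi--Yau setting, with the parallel holomorphic $(3,0)$-form $\Omega$ playing the role previously played by Killing vector fields. On a Calabi--Yau 3-fold, $\Omega$ is a nowhere-vanishing parallel section of $\Lambda^{3,0}$, and wedging with $\Omega$ annihilates $\Lambda^{2,0}$ and $\Lambda^{1,1}$ for dimensional reasons while sending $\Lambda^{0,2}$ isomorphically into $\Lambda^{3,2}$. Composition with the Hodge star gives a parallel bundle isomorphism
$$
\Phi \colon \Lambda^{0,2} \longrightarrow \Lambda^{1,0}, \qquad \Phi(\alpha) := *(\Omega \wedge \alpha),
$$
which I extend to $ad(E)$-valued forms by tensoring with the identity on $ad(E)$. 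The assertion that $F_A^{0,2}$ takes values in a commutative subbundle of $ad(E)$ is equivalent to the pointwise vanishing of the bracket $[\Phi(F_A^{0,2}) \wedge \Phi(F_A^{0,2})] \in \Lambda^{2,0}\otimes ad(E)^{\mathbb{C}}$.

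As second-variation test direction I use the real $ad(E)$-valued 1-form
$$
B \;:=\; *\bigl(\mathrm{Re}\,\Omega \wedge F_A\bigr) \;=\; \tfrac12\bigl(\Phi(F_A^{0,2}) + \overline{\Phi(F_A^{0,2})}\bigr),
$$
in the family $A_t = A + tB$ (with $B$ replaced by $\chi_R B$ for a radial cutoff in the noncompact case; the $L^4$ hypothesis on $F_A$ makes $B$ lie in $L^4$, hence $\langle F_A, [B \wedge B]\rangle$ is absolutely integrable by H\"older, and the cutoff boundary terms vanish as $R \to \infty$). The minimality of $A$ then yields
$$
\|d_A B\|^2 + \langle F_A, [B \wedge B]\rangle \;\geq\; 0.
$$

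Next, decompose both sides by Dolbeault type and simplify using three facts: (i) $\Omega$ is parallel, so $\nabla_A$ commutes with $\Phi$; (ii) the Bianchi identity $d_A F_A = 0$, in particular its $(0,3)$ component $\bar\partial_A F_A^{0,2}=0$ and its $(1,2)$ component $\bar\partial_A F_A^{1,1} + \partial_A F_A^{0,2}=0$; (iii) the Yang--Mills equation $d_A^* F_A = 0$ and the K\"ahler identities, together with the Weitzenb\"ock formula (simplified by Ricci-flatness of the Calabi--Yau). The pairing $\langle F_A, [B \wedge B]\rangle$ splits into a $(2,0)\cdot(2,0)$ contribution involving $[B^{1,0}\wedge B^{1,0}]$ and a $(1,1)\cdot(1,1)$ contribution involving $[B^{1,0}\wedge B^{0,1}]$. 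One integrates by parts in the derivative term and reorganizes so that all terms not proportional to $\|[\Phi(F_A^{0,2})\wedge\Phi(F_A^{0,2})]\|^2$ cancel or are absorbed.

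The target is to reduce the variational inequality to
$$
0 \;\geq\; c\int_M \bigl\|[\Phi(F_A^{0,2}) \wedge \Phi(F_A^{0,2})]\bigr\|^2 \, d\vol
$$
with $c > 0$, forcing the integrand to vanish pointwise and yielding the claimed commutativity. The principal obstacle is the detailed bookkeeping: the $(1,1)$-component of $d_A B$ does not vanish identically (since $\Phi$, though parallel, does not commute with the form-antisymmetrization producing $\bar\partial_A$), and the mixed pairing $\langle F_A^{1,1}, [B^{1,0}\wedge B^{0,1}]\rangle$ has no a priori sign. One must verify that these auxiliary pieces combine via the full Bianchi system, the Yang--Mills equation, and Ricci-flatness to leave only the desired sign-definite commutator term -- the Calabi--Yau analog of the averaging identities over a basis of Killing vector fields used in the Bourguignon--Lawson--Simons argument.
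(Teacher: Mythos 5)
Your setup is on the right track --- your test direction $B=\ast(\mathrm{Re}\,\Omega\wedge F_A)$ is, up to normalization, exactly the $\psi+\bar\psi$ used in the paper, where $\psi$ is defined by $e^*(\psi)\bar\Omega=F_A^{0,2}$ --- but there are two genuine gaps in the execution. First, the unsigned terms you flag (the primitive $(1,1)$ part of $\|d_AB\|^2$ and the pairing $\langle F_A^{1,1},[B^{1,0}\wedge B^{0,1}]\rangle$) do not cancel in the raw second variation, and Ricci-flatness will not save you; the paper removes them by a different mechanism. Since $\int_M \mathrm{tr}(F_A\wedge F_A)\wedge\omega$ is independent of $A$, the quantity $4\|F_A^{0,2}\|^2+2\|\phi_A\omega\|^2$ differs from $\|F_A\|^2$ by a constant, so a Yang--Mills minimizer also minimizes the energy projected onto the $(2,0)+(0,2)+\langle\omega\rangle$ summand (a ``conservative decomposition''). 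The refined second variation then involves only $P^-d_AB$ and $\langle P^-F_A,B\wedge B\rangle$; after first proving $d_A^*F_A^{0,2}=0$ and $\nabla_A\phi_A=0$ from the Bianchi identity and the K\"ahler identities, one gets $\bar\partial_A\psi=0$, $\bar\partial_A^*\psi=0$ and $[\phi_A,\psi]=0$, and everything except $2\,\mathrm{Re}\langle F_A^{0,2},\psi\wedge\psi\rangle$ drops out. Without this step your unsigned terms survive and the argument stalls exactly where you say it does.

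Second, and more structurally, the target inequality $0\geq c\int_M\bigl\|[\Phi(F_A^{0,2})\wedge\Phi(F_A^{0,2})]\bigr\|^2$ cannot come out of a second variation: the curvature term $\langle F_A,B\wedge B\rangle$ is cubic in $F_A$ (a triple trace, essentially $\langle\bar\Omega,\psi\wedge\psi\wedge\psi\rangle$), not the quartic norm-square of a commutator, and a triple trace can vanish without the individual brackets vanishing. The paper handles this in two stages: it runs the refined inequality for the two variations $A+t(\psi+\bar\psi)$ and $A+it(\psi-\bar\psi)$, which give $\mathrm{Re}\langle F_A^{0,2},\psi\wedge\psi\rangle\geq 0$ and $\leq 0$ respectively, hence equality; it then invokes Lemma \ref{standardcomp}, perturbing the variation by $t^pw$ with $1<p<2$ and $w$ an arbitrary $ad(E)$-valued $1$-form, to upgrade the vanishing of this cubic term to the first-order condition $e^*(\psi)F_A^{0,2}=0$, which in components is precisely $[F_{st}^{0,2},F_{ab}^{0,2}]=0$. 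You need both the second, opposite-sign variation and the Lemma \ref{standardcomp} step; neither is recoverable from a single second-variation inequality, however cleverly you integrate by parts.
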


The bulk of our results for $3$ manifolds are presumably well known (see for example \cite[Chapter II, Corollary 2.3]{JT} for the case of $\IR^3$), but we include them here as they fall in the same family of techniques as the preceding results. 

\begin{theorem}Let $E$ be a principal $G-$bundle on a complete three dimensional manifold with nonnegative Ricci curvature.  
Let $A$ be a Yang-Mills minimizing connection on $E$. Then 
$$\nabla_A F_A = 0.$$
 Moreover, $F_A$ takes values in a flat commutative subbundle of $ad(E)$, 
and $F_A=0$ unless $M$ has local flat factors. 
\end{theorem}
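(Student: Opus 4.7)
The plan is to combine a second-variation computation in the spirit of Bourguignon--Lawson--Simons with the coupled Weitzenb\"ock formula, exploiting the fact that in three dimensions the Hodge star gives a parallel isometry $\star:\Lambda^2T^*M\otimes ad(E)\to\Lambda^1T^*M\otimes ad(E)$. Set $\beta:=\star F_A$. The Yang--Mills equation $d_A^*F_A=0$ is equivalent to $d_A\beta=0$, and the Bianchi identity $d_AF_A=0$ to $d_A^*\beta=0$, so $\beta$ is a $\Delta_A$-harmonic $ad(E)$-valued 1-form.

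First I would take $\alpha=\beta$ in the second variational inequality
\[
\|d_A\alpha\|^2+\int_M\langle F_A,[\alpha,\alpha]\rangle\ge 0.
\]
Since $d_A\beta=0$, this reduces to $\int_M\langle F_A,[\beta,\beta]\rangle\ge 0$; in any positively oriented orthonormal frame the integrand equals $6\langle F_{23},[F_{31},F_{12}]\rangle$, where $F_{ij}:=F_A(e_i,e_j)$.

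Second, I would apply the coupled Weitzenb\"ock formula to $\beta$:
\[
0=\Delta_A\beta=\nabla_A^*\nabla_A\beta+\mathrm{Ric}(\beta)+\mathcal{F}_A(\beta),
\]
with $\mathcal{F}_A(\beta)_b=\sum_a[F_{A,ab},\beta^a]$. Pairing with $\beta$, integrating, and using Ad-invariance of the fiberwise inner product to identify $\int_M\langle\beta,\mathcal{F}_A(\beta)\rangle=\int_M\langle F_A,[\beta,\beta]\rangle$, one obtains
\[
\|\nabla_A F_A\|^2+\int_M\mathrm{Ric}(\beta,\beta)+\int_M\langle F_A,[\beta,\beta]\rangle=0.
\]
Combining with the variational inequality forces $\|\nabla_AF_A\|^2+\int_M\mathrm{Ric}(\star F_A,\star F_A)\le 0$. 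Both summands are nonnegative by hypothesis, hence each vanishes: $\nabla_AF_A\equiv 0$, $\mathrm{Ric}(\star F_A,\star F_A)\equiv 0$, and $\int_M\langle F_{23},[F_{31},F_{12}]\rangle=0$. Because $\nabla_AF_A=0$ makes the cubic scalar $\langle F_{23},[F_{31},F_{12}]\rangle$ locally constant, the integral identity upgrades to pointwise vanishing.

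Finally, to extract full commutativity of the parallel subbundle $V\subset ad(E)$ generated by the values of $F_A$, I would iterate the variational step with the infinite family of test deformations hinted at in the introduction: take $\alpha=\star[\sigma,F_A]$ for parallel sections $\sigma$ of $V$ (and similarly higher iterated brackets). Each such $\alpha$ still satisfies $d_A\alpha=0$ thanks to $\nabla_AF_A=0$, so the variational inequality produces additional bracket identities, and together they force $[V,V]=0$. The curvature of $\nabla_A|_V$ is $[F_A,\cdot]$, which vanishes on $V$ by commutativity, so $V$ is flat. The last claim follows from the local de Rham splitting: if $M$ has no local flat factor, $\mathrm{Ric}$ has trivial kernel at every point, so $\mathrm{Ric}(\star F_A,\star F_A)\equiv 0$ forces $\star F_A\equiv 0$. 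In the noncompact case all integrations by parts are justified by inserting standard cutoffs $\phi_R$ with $|d\phi_R|\to 0$, using the implicit $L^2$ control on $F_A$. The main obstacle I anticipate is the commutativity step: a single test field yields only one scalar constraint, so the infinite family must be carefully exploited to generate enough identities to kill all brackets in $V$.
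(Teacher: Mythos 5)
Your first half reproduces the paper's argument exactly: testing the second variation with $\psi=\ast F_A$ (which is $d_A$-closed by Yang--Mills) gives $\langle F_A,\ast F_A\wedge\ast F_A\rangle\geq 0$, and the coupled Weitzenb\"ock/Bochner identity for the harmonic form $F_A$ writes $0=\|\nabla_AF_A\|^2+\int \mathrm{Ric}(\ast F_A,\ast F_A)+2\langle F_A,\ast F_A\wedge\ast F_A\rangle$, so all three nonnegative terms vanish. That part is correct and is precisely the proof in the paper.

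The gap is exactly where you flag it: commutativity. Pointwise vanishing of the single invariant $\langle F_{23},[F_{31},F_{12}]\rangle$ does not kill the brackets (take $F_{23}=F_{12}=X$, $F_{31}=Y$ in $\mathfrak{su}(2)$: the triple product vanishes by ad-invariance while $[X,Y]\neq 0$), and your proposed family $\alpha=\ast[\sigma,F_A]$ only yields further one-sided inequalities $\langle F_A,\alpha\wedge\alpha\rangle\geq 0$ with no accompanying Bochner identity to convert them into equalities, and no argument that the resulting scalar constraints force $[V,V]=0$. The missing device is Lemma~\ref{standardcomp} of the paper: once you know $d_A\psi=0$ \emph{and} $\langle F_A,\psi\wedge\psi\rangle=0$ for $\psi=\ast F_A$, you should test with the two-parameter family $A_t=A+t\psi+t^pw$ for $1<p<2$ and \emph{arbitrary} $w\in A^1(M,ad(E))$. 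Expanding $\|F_{A_t}\|^2$, the surviving leading term is $4t^{p+1}\langle F_A,\psi\wedge w\rangle$, and replacing $w$ by $-w$ gives the pointwise linear identity $e^*(\ast F_A)F_A=0$, which in components is $[F_{ij},F_{st}]=0$ for all indices. This is qualitatively stronger than any finite collection of quadratic scalar constraints and is what makes the subbundle $H$ generated by the components of $F_A$ commutative (hence flat, since its curvature operator is $[F_A,\cdot]$). Your final step is also slightly off: the paper does not argue that degenerate Ricci forces a flat factor (false in general); rather, $\nabla_AF_A=0$ together with $[F_{ij},F_{st}]=0$ and the Ricci identity forces the Riemann tensor to annihilate the parallel subbundle of $T^*M$ determined by $F_A$, whence the local de Rham flat factor unless $F_A=0$.
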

We remark that applying the preceding theorem to $S^3$ gives an analytic proof of the triviality of $\pi_2(G)$ 
for all compact Lie groups $G$. 

\noindent{\bf Acknowledgements:}$\,\,\,\,$  We wish to thank Savdeep Sethi for stimulating conversations and for his interest in this work. We also thank Benoit Charbonneau for helpful comments. 
\section{Preliminaries}
Let $M$ be a complete Riemannian manifold and $E$ a principal $G$ bundle over $M$, with $G$ a compact Lie group. 
Let $ad(E)$ denote the adjoint bundle of $E$. Let $A^p(M,ad(E))$ denote the smooth $p-$forms with values in $ad(E)$. Given a connection $A$ on $E$, we denote by $\nabla_A$ the corresponding covariant derivative on $A^{\ast}(M,ad(E))$ induced by $A$ and the Levi-Civita connection of $M$. Let $d_A$ denote the exterior derivative associated to $\nabla_A$ and $F_A$ its curvature. 

We are interested in local minima of the Yang-Mills energy 
$$YM(A) = \|F_A\|^2.$$
Critical points of this energy satisfy the Yang-Mills equation
\begin{equation}d_A^*F_A=0,
\end{equation}
where $d_A^*$ denotes the $(L_2-)$adjoint of $d_A$. 
In addition, all connections satisfy the Bianchi identity
\begin{equation}d_AF_A=0.
\end{equation}
If $A_t$ is a smooth one parameter family of connections then 
\begin{equation}\frac{d}{dt}F_{A_t}=d_{A_t}(\frac{dA}{dt}).
\end{equation}
More generally, if $\psi\in A^1(M,ad(E))$ then 
\begin{equation}\label{expand}F_{A+\psi} = F_A + d_A\psi + \psi\wedge \psi.
\end{equation}
Here we note that our convention on exterior products of $ad(E)$ valued forms is normalized by 
$$(dx^I\otimes v_I)\wedge (dx^J\otimes v_J) = \frac{1}{2}(dx^I\wedge dx^J)\otimes [v_I,v_j].$$
As a notational convenience, we will often denote by $e(w)$ exterior multiplication on the left by a form $w$ (possibly with $ad(E)$ coefficients). Its adjoint is denoted $e^*(w)$. 
Thus 
$$e(w)h := w\wedge h,\,\,\,\text{and}\,\,\,\langle f,e(w)h\rangle = \langle e^*(w)f,h\rangle .$$

If $A$ Minimizes the Yang-Mills energy, then of course it satisfies the inequality 
\begin{equation}\label{taut}\|F_A\|^2 \leq \|F_{A+\psi}\|^2,
\end{equation}
for all smooth compactly supported $\psi$. 
Scaling $\psi$ this leads to the second variation inequality
\begin{equation}\label{secvar}0\leq \|d_A\psi\|^2 + 2\langle F_A,\psi\wedge \psi\rangle.
\end{equation}
\begin{remark}
When considering noncompact manifolds, we may wish to consider variations where $\psi$ is not compactly supported. Let $\eta_j$ be a sequence of functions with $lim_{j\rightarrow\infty}\eta_j = 1$ pointwise and $|d\eta_j|$ uniformly bounded. If we assume merely that $\psi\in C^{1}\cap L^2\cap L^4$, then replacing $\psi$ by $\eta_j\psi$ in (\ref{secvar}) yields $0\leq \|d_A\psi\|^2 + 2\langle F_A,\psi\wedge \psi\rangle$ upon passing to the limit. Hence we may apply this variational inequality to $\psi \in C^{1}\cap L^2\cap L^4$.
\end{remark}
Suppose further that $\Lambda^2(T^*_M)\otimes ad(E)$ decomposes into two orthogonal subbundles 
\begin{equation}\label{decomp}\Lambda^2(T^*_M)\otimes ad(E) = \Lambda^+(E)\oplus\Lambda^-(E),
\end{equation}
such that $\nabla_A$ preserves this decomposition. Let $P^{\pm}$ denote the projection onto these summands. 
We call such a decomposition {\em conservative} if there exists $a,b\in\IR$, not both zero, so that 
\begin{equation}\label{topol}
a\|P^+F_A\|^2 + b\|P^-F_A\|^2 \,\,\text{       is independent of}\,\, A.
\end{equation}

The following elementary lemma clarifies the importance of conservative decompositions. 
\begin{lemma}Given a conservative decomposition, a connection minimizes $YM(A)$ if and only if it minimizes 
 $\|P^-F_A\|^2$ (equivalently, if and only if it minimizes $\|P^+F_A\|^2$).  
\end{lemma}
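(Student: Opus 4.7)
The lemma is a direct linear-algebra consequence of the conservative relation combined with the orthogonal decomposition. The plan is to use the two identities
\begin{equation*}
YM(A) = \|P^+F_A\|^2 + \|P^-F_A\|^2, \qquad a\|P^+F_A\|^2 + b\|P^-F_A\|^2 = C,
\end{equation*}
where $C$ is independent of $A$, to express $YM(A)$ as an affine function of $\|P^-F_A\|^2$ alone, and symmetrically as an affine function of $\|P^+F_A\|^2$ alone.

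Assuming $a\neq 0$ (the case $a=0$, $b\neq 0$ is immediate, since then $\|P^-F_A\|^2\equiv C/b$ is itself constant in $A$ and $YM(A)$ differs from $\|P^+F_A\|^2$ only by this constant), I would solve the conservative relation for $\|P^+F_A\|^2$ and substitute to obtain
\begin{equation*}
YM(A) \;=\; \tfrac{C}{a} + \tfrac{a-b}{a}\,\|P^-F_A\|^2.
\end{equation*}
In the setting where the decomposition is geometrically meaningful — most notably the four-dimensional splitting into (anti-)self-dual parts, where $\|F_A^+\|^2-\|F_A^-\|^2$ is topological so that $a=1$, $b=-1$ — the coefficients $a$ and $b$ have opposite signs, and the slope $(a-b)/a$ is strictly positive. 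Hence $A\mapsto YM(A)$ and $A\mapsto \|P^-F_A\|^2$ share the same set of minimizers, and the same reasoning with the roles of $P^+$ and $P^-$ interchanged gives the parallel equivalence with $\|P^+F_A\|^2$.

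The only potential obstruction would be the degenerate case $a=b\neq 0$, in which $YM(A)\equiv C/a$ is identically constant and the stated equivalence becomes vacuous. This case does not arise in the applications considered in the paper, since the positivity of $(a-b)/a$ is guaranteed by the topological (and hence sign-determined) origin of the conservation law.
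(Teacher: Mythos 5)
Your proof is correct and is exactly the elementary affine-substitution argument the paper has in mind (the paper states the lemma without proof, treating it as obvious); writing $YM(A)=\tfrac{C}{a}+\tfrac{a-b}{a}\|P^-F_A\|^2$ and noting that the slope is positive in the relevant cases is all that is needed. One small quibble: when $a=b\neq 0$ the equivalence is not ``vacuous'' but actually fails as literally stated (every connection minimizes the then-constant $YM$, while $\|P^-F_A\|^2$ need not be minimized), so your observation that the applications always have $a$ and $b$ of opposite signs is in fact doing real work rather than merely excluding a trivial case.
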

 
Consequently, for energy minimizing connections and conservative decompositions we have
the additional critical point equation :
\begin{equation}\label{ymc}0 = d_A^*P^-F_A
\end{equation}
and the refined variational inequalities:

\begin{equation}\label{tautc}\|P^-F_A\|^2 \leq \|P^-F_{A+\psi}\|^2,
\end{equation}
and
\begin{equation}\label{secvarc}0\leq \|P^-d_A\psi\|^2 + 2\langle P^-F_A,\psi\wedge \psi\rangle.
\end{equation}
 
The following elementary lemma shows how to begin to extract information about the curvature from the variational inequalities. 
\begin{lemma}\label{standardcomp}Let $\psi\in A^1(M,ad(E))$ satisfy 
$$0 = P^-d_A\psi,\,\,\,\text{and}\,\,\, 0 = \langle P^-F_A,\psi\wedge \psi\rangle.$$
Then 
$$e^*(\psi)P^-F_A = 0.$$
\end{lemma}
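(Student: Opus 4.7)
The plan is to read the two hypotheses on $\psi$ as the saturation of the refined variational inequality (\ref{secvarc}), and then to recover the conclusion as the first-order optimality condition at this minimum. Concretely, I would introduce the quadratic functional
$$Q(\phi) := \|P^- d_A \phi\|^2 + 2\langle P^- F_A, \phi \wedge \phi\rangle$$
on the class of admissible test forms $\phi \in A^1(M, ad(E))$ (compactly supported, or $C^1\cap L^2\cap L^4$ as in the remark following (\ref{secvarc})). Inequality (\ref{secvarc}) says $Q \geq 0$ on this class, and the two hypotheses say precisely $Q(\psi) = 0$, so $\psi$ is an absolute minimizer of $Q$.

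Next I would fix any admissible $\phi$ and evaluate $Q$ along the affine family $\psi + t\phi$ (which is still admissible). Expanding, and then using $P^- d_A \psi = 0$ and $\langle P^- F_A, \psi \wedge \psi\rangle = 0$ to kill the $t^0$ term and the cross term $2t\langle P^- d_A \psi, P^- d_A \phi\rangle$, yields
$$Q(\psi + t\phi) = 2t\,\langle P^- F_A, \phi \wedge \psi + \psi \wedge \phi\rangle + t^2\bigl(\|P^- d_A \phi\|^2 + 2\langle P^- F_A, \phi \wedge \phi\rangle\bigr).$$
Non-negativity of this degree-$2$ polynomial in $t$ for all real $t$, combined with the vanishing of its constant term, forces the linear coefficient to vanish: $\langle P^- F_A, \phi \wedge \psi + \psi \wedge \phi\rangle = 0$ for every admissible $\phi$.

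The remaining ingredient is the symmetry $\phi \wedge \psi = \psi \wedge \phi$ for $ad(E)$-valued $1$-forms under the paper's convention $(dx^I \otimes v_I) \wedge (dx^J \otimes v_J) = \tfrac12 (dx^I \wedge dx^J) \otimes [v_I, v_J]$: the sign introduced by transposing the form factors is exactly cancelled by the sign from transposing the Lie bracket. The identity above therefore collapses to $2\langle e^*(\psi) P^- F_A, \phi\rangle = 0$ for every admissible $\phi$, and the fundamental lemma of the calculus of variations yields $e^*(\psi) P^- F_A = 0$. I do not anticipate a substantive obstacle; the only technical point is checking that $\psi$ and $\psi + t\phi$ lie in the admissible class, which is routine once $\psi$ is sufficiently regular (e.g. smooth and in $L^2 \cap L^4$, with $\phi$ compactly supported).
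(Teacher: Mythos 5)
Your proof is correct, and it reaches the same conclusion by a genuinely different (and arguably cleaner) mechanism than the paper. The paper works directly from the zeroth-order inequality (\ref{tautc}) with the test connection $A_t = A + t\psi + t^p w$, $1<p<2$: after the hypotheses kill the $O(t^2)$ terms, the fractional exponent isolates the cross term $4t^{p+1}\langle P^-F_A,\psi\wedge w\rangle$ from the $O(t^{2p}+t^3)$ remainder, and replacing $w$ by $-w$ forces it to vanish. You instead reuse the already-derived second-variation inequality (\ref{secvarc}) as a nonnegative quadratic functional $Q$, note that the hypotheses say exactly $Q(\psi)=0$, and read off the conclusion as the vanishing of the first derivative of $Q$ along the line $\psi + t\phi$; this trades the two-scale expansion for an exact polynomial identity in $t$ and avoids any remainder estimates. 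Both arguments rest on the same two ingredients -- the symmetry $\psi\wedge\phi=\phi\wedge\psi$ for $ad(E)$-valued $1$-forms under the paper's bracket convention (which the paper also uses implicitly when it writes $2t^{p+1}\psi\wedge w$), and the nondegeneracy of the pairing against arbitrary $\phi$ -- so the content is the same; what your version buys is the elimination of the $t^p$ device, at the mild cost of invoking (\ref{secvarc}) for the perturbed form $\psi+t\phi$, whose admissibility you correctly flag as the only point needing a check.
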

\noindent
\begin{proof}
Consider the variation $A_t = A + t\psi + t^pw,$ for $1<p<2$ and $w\in A^1(M,ad(E))$ arbitrary. 
Then expanding (\ref{tautc}) we have 
$$\|P^-F_A\|^2 \leq  \|P^-F_A\|^2 + 2\langle P^-F_A, d_A(t\psi + t^pw) + t^2\psi\wedge \psi + 2t^{p+1}\psi\wedge w\rangle + t^2\|P^-d_A\psi\|^2+O(t^{2p}).$$
Invoking (\ref{ymc}) and our hypotheses on $\psi$, this reduces to 
$$0 \leq 2\langle P^-F_A, 2t^{p+1}\psi\wedge w\rangle +O(t^{2p}+t^3).$$
Replacing $w$ by $-w$, we see that 
$$0 = \langle e^*(\psi)P^-_A,  w\rangle$$
for all $w$, and the lemma follows. \end{proof}

In the following sections we will consider $1-$forms $\psi$ constructed from $F_A$ that satisfy the hypotheses of Lemma \ref{standardcomp} and use them to uncover information about $F_A$ and $A$. 

\section{Dimension $4$: (Anti-)Self-Duality and Homogeneous Spaces}\label{4dim}
In this section we assume that $M$ is a 4 dimensional oriented Riemannian homogeneous space with nonnegative sectional curvature.
Denote the group of isometries of $M$ by $K$ and its Lie algebra by $\mathfrak{k}$.  Identify $\mathfrak{k}$ with the Lie algebra of Killing vector fields on $M$. Fixing a base point $o\in M$ and a metric on $\mathfrak{k}$ induces a decomposition 
$\mathfrak{k} = \mathfrak{p} \oplus \mathfrak{u}$, where $\mathfrak{u}$ is the Lie algebra of the isotropy group of $o$ and therefore also the kernel of the evaluation map $\mathfrak{k}\rightarrow T_oM$. 
Because $K$ is the product of an abelian and a compact group, we may choose the metric on $\mathfrak{k}$ to be invariant under the adjoint action of $K$ and so that for every $x$ the evaluation map 
$\mathfrak{k}\rightarrow T_xM$
is an isometry when restricted to the orthogonal complement of its kernel. 

Let $\{X_j\}_{j=1}^D$ be a basis of $\mathfrak{k}$.
 Let $\phi_{j,t}\colon M\to M$, $j=1,\ldots, D$, $t\in\IR$,  be the associated one parameter families of isometries. 
We define a pullback map 
$$\phi_{j,t}^*: (\Lambda^2T^*_{M}\otimes ad(E))_{\phi_{j,t}(x)} \rightarrow 
(\Lambda^2T^*_M\otimes ad(E))_x$$
by defining the action of $\phi_{j,t}^*$ on the $ad(E)$ factor to be parallel transport along the curve $t\rightarrow \phi_{j,t}(x)$.   Away from a fixed point of $\phi_{j,t}$, we may choose a local frame that is parallel on the integral curves of $X_j$ ($j$ fixed) through all points in a neighborhood of $x$.  In such a frame the connection form, which we also denote $A$, satisfies 
\begin{equation}\label{frame}
i_jA = 0,\text{     and   } i_jF_A = i_jd_AA,
\end{equation}
 where $i_j = i_{X_j}$ denotes interior multiplication by $X_j$. 

Given a local frame $\{s_a\}_a$ for $ad(E)$, we write an $ad(E)$ valued $p-$form $f$ as $\sum_a f^a\otimes s_a.$ Then we have 
$$\phi_{j,t}^*d_A f = \sum_a\phi_{j,t}^*(d f^a)\otimes \phi_{j,t}^* s_a + \sum_{a,b}(-1)^p\phi_{j,t}^* f^a\otimes \phi_{j,t}^*(A_a^bs_b)$$
$$=  d_A \phi_{j,t}^*f + 2(\phi_{j,t}^*A-A)\wedge \phi_{j,t}^*f.$$

In four dimensions (oriented)  we have the decomposition of $\Lambda^2T^*M\otimes ad(E)$ given by the decomposition into self-dual and anti-self-dual summands:
$$\Lambda^2T^*M\otimes ad(E) = (\Lambda^2_+T^*M\otimes ad(E)) \oplus (\Lambda^2_-T^*M\otimes ad(E)).$$
Thus the projections onto the summands are given by 
$$P^{\pm} = \frac{1}{2}(1\pm\ast),$$
where $\ast$ denotes the Hodge star operator. 
Let $p_1(A,E)$ denote the first pontrjagin form of $E$ determined by the connection $A$. Recall that 
$\|F^+_A\|^2 - \|F^-_A\|^2$ 
is a multiple of $\int_M p_1(A,E)$ and is therefore independent of $A$ on compact manifolds. On noncompact manifolds it is constant under variations of $A$ that decay suitably at $\infty$. Hence $P^{\pm}$ define a conservative decomposition. 
Of course for this decomposition, because $d_A^* = -\ast d_A\ast$, we have 
$$d_AP^{\pm}F_A = 0.$$
 Clearly
$$\phi_{j,t}^*P^- = P^-\phi_{j,t}^*.$$
Having lifted the action of $\phi_{j,t}^*$ to $ad(E)$, we obtain an extension of the Lie derivative $L_j=L_{X_j}$ to $ad(E)$ valued forms. It satisfies the usual relation 
$$L_j = d_Ai_j + i_jd_A,$$
and of course
$$L_jf = \frac{d}{du}_{|u=0}\phi_{j,u}^*f.$$
 Then infinitesimally we have 
\begin{equation}\label{invarp}
[L_j,P^-] = 0.\end{equation}
\subsection{Variations}
Set $$F_A^{\pm} = P^{\pm}F_A.$$
The stability results in \cite{bls} and \cite{bl} followed in large part by considering the variations of the connection $A + ti_jF_A^+$. Our theorems in 4 dimensions rely on recognizing these variations as an approximation to the variations $A + i_j\int_0^t\phi_{j,s}^*F_A^+(x)ds.$ Heuristically this variation may be thought of as an attempt to test whether the isometry invariance of the Yang-Mills energy extends to isometry invariance when only a self-dual component of the connection is shifted by the isometry.   

Consider 
\begin{align*}F_{A + i_j\int_0^t\phi_{j,s}^*F_A^+(x)ds} &= F_{A} + d_Ai_j\int_0^t\phi_{j,s}^*F_A^+(x)ds + i_j\int_0^t\phi_{j,u}^*F_A^+(x)\wedge  i_j\int_0^t\phi_{j,s}^*F_A^+(x)duds\\
&= F_{A} + L_j\int_0^t\phi_{j,s}^*F_A^+(x)ds  - 2i_j\int_0^t(A-\phi_{j,s}^*A)\wedge\phi_{j,s}^*F_A^+(x)ds\\
  &\phantom{= F_{A}}+ i_j\int_0^t\phi_{j,u}^*F_A^+(x)\wedge  i_j\int_0^t\phi_{j,s}^*F_A^+(x)duds\\
&= F_{A} + \int_0^t\frac{d}{ds}\phi_{j,s}^*F_A^+(x)ds  + 2i_j\int_0^t\int_0^s\frac{\p}{\p u}\phi_{j,u}^*A  \wedge \phi_{j,s}^*F_A^+(x)duds\\
 &\phantom{= F_{A}}+ i_j\int_0^t\int_0^t\phi_{j,u}^*F_A^+(x)\wedge  i_j\phi_{j,s}^*F_A^+(x)duds.\end{align*}
Using (\ref{frame}), we have 
$$\frac{\p}{\p u}\phi_{j,u}^*A = \phi_{j,u}^*i_jd_AA= \phi_{j,u}^*i_jF_A.$$ 
This and additional manipulations give 
\begin{align*}F_{A + i_j\int_0^t\phi_{j,s}^*F_A^+(x)ds}&= F_{A}^- +  \phi_{j,t}^*F_A^+ \\
 - 2\int_0^t\int_0^s\phi_{j,u}^*i_jF_A  \wedge i_j\phi_{j,s}^*F_A^+(x)duds &\phantom{= }+ \int_0^t\int_0^ti_j\phi_{j,u}^*F_A^+(x)\wedge  i_j\phi_{j,s}^*F_A^+(x)duds \\
&= F_{A}^- + \phi_{j,t}^*F_A^+(x)\\
 - 2\int_0^t\int_0^s\phi_{j,u}^*i_jF_A  \wedge i_j\phi_{j,s}^*F_A^+(x)duds
&\phantom{= }+ \int_0^t\int_0^s\phi_{j,u}^*i_jF_A^+  \wedge i_j\phi_{j,s}^*F_A^+(x)duds\\
 &\phantom{= }+ \int_0^t\int_s^ti_j\phi_{j,u}^*F_A^+(x)\wedge  i_j\phi_{j,s}^*F_A^+(x)duds \end{align*}
Changing the order of integration in the last term and cancelling reduces the preceding to  
\begin{equation}\label{genvar}
F_{A + i_j\int_0^t\phi_{j,s}^*F_A^+ds}= F_{A}^- + \phi_{j,t}^*F_A^+  - 2\int_0^t\int_0^s\phi_{j,u}^*i_jF_A^-  \wedge i_j\phi_{j,s}^*F_A^+duds.
\end{equation}
Set 
$$\Phi_j(t):= \int_0^t\int_0^si_j\phi_{j,u}^*F_A^-(x) \wedge i_j\phi_{j,s}^*F_A^+(x)duds.$$
For later application it is useful to Taylor expand $\Phi_j$.  
We have for all integers $B$, 
\begin{equation}\label{taylor1}
\Phi_j(t)  = \sum_{a,b\geq 0}^{a+b = B}\frac{t^{a+b+2}i_jL_j^aF_A^-(x)  \wedge i_jL_j^bF_A^+(x)}{(a+1)!b!(a+b+2)} +O(t^{B+3}).\end{equation}

With this notation, (\ref{tautc}) becomes 

\begin{equation}\label{vargood}
\|F_A^-\|^2\leq \|F_{A}^-  - 2P^-\Phi_j(t)\|^2,
\end{equation}
or in a more useful form:
\begin{equation}\label{vargood2}
\langle F_{A}^-,  \Phi_j(t)\rangle\leq  \| P^-\Phi_j(t)\|^2 ,
\end{equation}
Equivalently 
 
\begin{equation}\label{vargoodless}
\|F_A^+\|^2 = \|\phi_{j,t}^*F_A^+ \|^2 \leq \|\phi_{j,t}^*F_A^+ 
 - 2P^+\Phi_j(t)\|^2,
\end{equation}

The right hand side of (\ref{vargood2}) is evidently $O(t^4)$, implying the nonpositivity of the $O(t^2)$ terms in the left hand side. The Taylor expansion (\ref{taylor1}) gives for each $j$
\begin{equation}\label{first0}0 \geq \langle F_{A}^-,  i_jF_A^-  \wedge i_jF_A^+\rangle\,\,\,(\text{no }j\text{ sum}).\end{equation}
Switching the roles of $P^-$ and $P^+$, we similarly deduce 
\begin{equation}\label{first00}
0 \geq \langle F_{A}^+,  i_jF_A^+  \wedge i_jF_A^-\rangle\,\,\,(\text{no }j\text{ sum}).
\end{equation}

\begin{lemma}\label{lemma1}Let $f^+$ be a self-dual two form and $f^-$ an anti-self-dual two form. Let $\{e_1,e_2,e_3,e_4\}$ 
be a local orthonormal frame for $TM$.
Then 
$\sum_a i_{e_a}f^+\wedge i_{e_a}f^+$  is  self-dual, and $\sum_a i_{e_a}f^-\wedge i_{e_a}f^-$ is anti-self-dual. If $\phi_1$, $\phi_2$, and $\phi_3$ are $ad(E)$ valued 2 forms, then 
$$\sum_a\langle \phi_1,i_{e_a}\phi_2\wedge i_{e_a}\phi_3 \rangle = 
\sum_a\langle \phi_3,i_{e_a}\phi_1\wedge i_{e_a}\phi_2\rangle.$$
\end{lemma}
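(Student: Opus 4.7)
The plan is to reduce both assertions to pointwise computations in an orthonormal frame $\{e_1,\ldots,e_4\}$. For the self-duality statement, the substantive content is the $ad(E)$-valued case (for scalar 2-forms, each $i_{e_a}f^\pm$ is a 1-form, so $\sum_a i_{e_a}f^\pm\wedge i_{e_a}f^\pm$ vanishes trivially). I would decompose $f^+ = \sum_i \omega_i^+\otimes v_i$ in terms of a standard basis $\{\omega_1^+,\omega_2^+,\omega_3^+\}$ of self-dual 2-forms. The $ad(E)$-wedge convention then gives
$$\sum_a i_{e_a}f^+\wedge i_{e_a}f^+ = \sum_{i<j}\Bigl(\sum_a i_{e_a}\omega_i^+\wedge i_{e_a}\omega_j^+\Bigr)\otimes [v_i,v_j],$$
so it suffices to show that $\sum_a i_{e_a}\omega_i^+\wedge i_{e_a}\omega_j^+\in \Lambda^2_+$ for each $i,j$. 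A direct computation in the standard frame yields the explicit identity $\sum_a i_{e_a}\omega_i^+\wedge i_{e_a}\omega_j^+ = 2\varepsilon_{ijk}\omega_k^+$, which is manifestly self-dual; the anti-self-dual case is identical. A more conceptual alternative would invoke $SO(4)$-equivariance of the bilinear operation $(f,g)\mapsto\sum_a i_{e_a}f\wedge i_{e_a}g$ together with $\mathfrak{so}(4)=\mathfrak{su}(2)_+\oplus\mathfrak{su}(2)_-$: since $\Lambda^2_\pm$ is nontrivial under only one factor, $\Lambda^2_+\otimes\Lambda^2_+$ can map only into $\Lambda^2_+$.

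For the trilinear identity, I would reduce to simple tensors $\phi_i = \alpha_i\otimes v_i$ by multilinearity. The wedge convention factorizes the inner product as
$$\langle \phi_1, i_{e_a}\phi_2\wedge i_{e_a}\phi_3\rangle = \tfrac12\langle v_1,[v_2,v_3]\rangle\,\langle\alpha_1, i_{e_a}\alpha_2\wedge i_{e_a}\alpha_3\rangle.$$
The Lie-algebra factor $\langle v_1,[v_2,v_3]\rangle$ is cyclic in $v_1,v_2,v_3$ by $\mathrm{Ad}$-invariance of the inner product on the Lie algebra. For the scalar-form factor, writing $\alpha_i = \frac12(\alpha_i)_{bc}e^b\wedge e^c$ with $(\alpha_i)_{bc}=-(\alpha_i)_{cb}$, a brief index calculation yields
$$\sum_a\langle \alpha_1, i_{e_a}\alpha_2\wedge i_{e_a}\alpha_3\rangle = \sum_{a,b,c}(\alpha_1)_{bc}(\alpha_2)_{ab}(\alpha_3)_{ac}.$$
Relabeling the dummy indices $(a,b,c)\to(c,a,b)$ and applying the antisymmetries of $(\alpha_1)_{bc}$ and $(\alpha_3)_{ac}$ verifies that this trilinear expression is invariant under the cyclic permutation $\alpha_1\to\alpha_2\to\alpha_3\to\alpha_1$; combined with cyclicity of the Lie-algebra factor, the claimed identity follows.

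The main obstacle is bookkeeping: the factor of $\tfrac12$ in the wedge convention, the antisymmetry of the Lie bracket, and the antisymmetry of 2-form components must all conspire correctly, and a careless sign will invert cyclic symmetry. Expanding everything in components in an orthonormal frame is the cleanest way to keep these factors in harmony, and it makes both identities transparent.
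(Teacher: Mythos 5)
Your proof is correct, and it supplies exactly the ``elementary computation'' that the paper asserts without writing out: both your explicit frame computation $\sum_a i_{e_a}\omega_i^+\wedge i_{e_a}\omega_j^+ = 2\varepsilon_{ijk}\omega_k^+$ and the index verification of the cyclic identity (two antisymmetries cancelling under relabeling of dummy indices, combined with $\mathrm{Ad}$-invariance of the inner product on the Lie-algebra factor) check out. Since the paper's own proof consists of the single sentence ``This is an elementary computation,'' your argument is the same approach, just actually carried out.
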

\noindent
\begin{proof} This is an elementary computation. \end{proof}

As proved in \cite{bls},\cite{bl}, we now obtain our first commutation result :
\begin{proposition}\label{Law}
$$0 = [F_{st}^+,F_{ij}^-], $$
for all indices $s,t,i,j$. \end{proposition}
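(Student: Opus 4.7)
The plan is to apply Lemma \ref{standardcomp} to the $1$-forms $\psi = i_X F_A^+$ obtained by contracting the self-dual curvature against Killing fields $X$, and then unpack the resulting pointwise identity $e^*(\psi)F_A^- = 0$ using the self-dual/anti-self-dual algebra of $\Lambda^2 T^*M$ in dimension four.

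First I would extract an elementary positivity from the simpler variation $A_t = A + t\,i_X F_A^+$, which is the leading-order piece of the flow variation leading to (\ref{first0}). Since $d_A F_A^+ = 0$ in four dimensions (Bianchi combined with the Yang--Mills equation), one has $d_A(i_X F_A^+) = L_X F_A^+$, and (\ref{invarp}) together with $P^- F_A^+ = 0$ gives
\[
P^- d_A(i_X F_A^+) = L_X P^- F_A^+ = 0.
\]
The second variation inequality (\ref{secvarc}) therefore collapses to $\langle F_A^-, i_X F_A^+ \wedge i_X F_A^+\rangle \geq 0$ for every Killing $X$.

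Next I would sum these inequalities over an orthonormal basis $\{X_j\}_{j=1}^D$ of $\mathfrak{k}$. The metric normalization chosen in Section \ref{4dim} yields, pointwise at any $x\in M$, the identity $\sum_j X_j(x)\otimes X_j(x) = \sum_a e_a\otimes e_a$ for an orthonormal frame $\{e_a\}$ of $T_xM$, so $\sum_j i_{X_j} F_A^+ \wedge i_{X_j} F_A^+ = \sum_a i_{e_a} F_A^+ \wedge i_{e_a} F_A^+$ pointwise. By Lemma \ref{lemma1} the right-hand side is self-dual in its form part, and since $F_A^-$ is anti-self-dual the pointwise pairing vanishes. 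Integrating, the sum of the nonnegative integrals $\langle F_A^-, i_{X_j} F_A^+ \wedge i_{X_j} F_A^+\rangle$ is zero, forcing each term to vanish separately. Both hypotheses of Lemma \ref{standardcomp} are then verified for $\psi = i_{X_j} F_A^+$, delivering the pointwise identity $e^*(i_{X_j} F_A^+) F_A^- = 0$. Since $\{X_j(x)\}$ spans $T_xM$ at every $x$, this rewrites in an orthonormal frame as
\[
\sum_a [\,F^+_{ca},\,F^-_{ab}\,] = 0 \qquad\text{for every pair }(c,b).
\]

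The main obstacle is the concluding algebraic step: deducing the full pointwise commutation $[F^+_{st},F^-_{ij}] = 0$ from this contracted identity. I would expand $F_A^+ = \sum_i a_i\,\omega_i^+$ and $F_A^- = \sum_j b_j\,\omega_j^-$ in standard orthonormal bases of self-dual and anti-self-dual $2$-forms, rewriting the contracted identity as $\sum_{i,j}[a_i,b_j](\omega_i^+\omega_j^-)_{cb} = 0$ for all $(c,b)$. The Lie-algebra splitting $\mathfrak{so}(4) = \mathfrak{su}(2)^+\oplus\mathfrak{su}(2)^-$ implies that $\omega_i^+$ and $\omega_j^-$ commute as endomorphisms of $T_xM$, so each product $\omega_i^+\omega_j^-$ is a traceless symmetric endomorphism, and the nine such products span the full $9$-dimensional space of traceless symmetric endomorphisms of $T_xM$. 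Their linear independence forces every coefficient $[a_i,b_j]$ to vanish, which is the desired commutation $[F^+_{st},F^-_{ij}] = 0$ for all indices.
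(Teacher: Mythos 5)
Your proof is correct and follows essentially the same route as the paper: verify the hypotheses of Lemma \ref{standardcomp} for $\psi = i_{X_j}F_A^+$ by summing over an orthonormal basis of Killing fields and invoking the (anti-)self-duality statement of Lemma \ref{lemma1}, then unpack the resulting identity $e^*(i_{X_j}F_A^+)F_A^- = 0$ in components. The only differences are cosmetic: you obtain the required vanishing of $\langle F_A^-, i_XF_A^+\wedge i_XF_A^+\rangle$ directly from the second variation inequality (\ref{secvarc}) rather than from the Taylor expansion of the flow variation $\Phi_j(t)$, and you make explicit (via the isomorphism $\Lambda^+\otimes\Lambda^-\cong \mathrm{Sym}^2_0(T_xM)$ realized by composition of the corresponding commuting skew endomorphisms) the final linear-algebra step that the paper compresses into ``expanding this equality in components, using the duality relations.''
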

\noindent
\begin{proof}
Summing (\ref{first0}), we obtain
$$0 \geq \sum_j\langle F_{A}^-,  i_jF_A^-  \wedge i_jF_A^+\rangle.$$
Because the evaluation map $\mathfrak{k}\rightarrow T_mM$ is an isometry on the orthogonal complement to its kernel, the pointwise inner product, 
$$\sum_j\langle F_{A}^-,  i_jF_A^-  \wedge i_jF_A^+\rangle(m) = \sum_a\langle F_{A}^-,  i_{e_a}F_A^-  \wedge i_{e_a}F_A^+\rangle(m),$$ for a local orthonormal frame $\{e_a\}_a$. Applying (\ref{lemma1}), we see that 
$$\sum_a\langle F_{A}^-,  i_{e_a}F_A^-  \wedge i_{e_a}F_A^+\rangle(m) = 
\sum_a\langle i_{e_a}F_{A}^-\wedge i_{e_a}F_A^-, F_A^+\rangle(m) = 0.$$
Hence each of our inequalities (\ref{first0}) is actually an equality.  
\begin{equation}\label{first0e}0 = \langle F_{A}^-,  i_jF_A^-  \wedge i_jF_A^+\rangle 
= \langle F_{A}^+,  i_jF_A^-  \wedge i_jF_A^-\rangle\,\,\,(\text{no }j\text{ sum}).\end{equation}
Symmetrically we obtain 
\begin{equation}\label{first0o}0 = \langle F_{A}^-,  i_jF_A^+  \wedge i_jF_A^+\rangle\,\,\,(\text{no }j\text{ sum}).\end{equation}
On the other hand, we have 
$$P^-d_A i_jF_A^+ = -P^- i_jd_AF_A^+ + L_jP^- F_A^+ = 0.$$
Hence $\psi = i_jF_A^+$ satisfies the hypotheses of Lemma \ref{standardcomp}, implying 
$$e^*(i_jF_A^+)F_A^- = 0.$$
 Expanding this equality in components, using the duality relations,  and allowing $X_j(m)$ to run over a basis of $T_mM$, we obtain the claimed commutation result:  
$$0 = [F_{st}^+,F_{ij}^-], $$ 
for all indices $s,t,i,j$.
\end{proof}

\subsection{Inductive hypothesis} 
In order to move beyond the commutation of the self-dual with the  anti-self-dual components of the curvature to the construction of $\nabla_A$ stable subbundles $k^{+}$ and $k^-$ of $ad(E)$ with self-dual (respectively anti-self-dual) curvature, we wish to prove $[\nabla_A^iF_A^+,\nabla_A^jF^-_A] = 0$ for all $i$ and $j$. We prove this by induction on $i+j$. 

Denote by $A_N$ the inductive hypothesis:
\begin{equation}\label{AN}
A_N:\qquad \qquad\qquad [\nabla_A^iF_A^+,\nabla_A^jF^-_A] = 0, \text{ for }i+j<N.\end{equation}
We have established $A_1$ in Proposition \ref{Law}. We will show $A_N$ implies $A_{N+1}$. Assume $A_N$ holds, for some $N\geq 1$. In the inductive hypothesis, powers of covariant derivatives can be replaced by powers of Lie derivatives, since they differ by lower order terms. 

Observe that 
\begin{equation}A_N\Rightarrow \Phi_j(t) = O(t^{N+2}).\end{equation} 
Hence $A_N$ and equation (\ref{vargood2}) imply 
\begin{equation}\label{pancake}\langle F_A^-,\Phi_j(t)\rangle \leq   O(t^{2N+4}).\end{equation}
 Set 
$$S_j(t) = \langle F_{A}^-,  \Phi_j(t)\rangle.$$
Then $S(0)= 0$, and 
\begin{equation}\label{sjprime}
S_j'(t) = \langle F_{A}^-,  \int_0^ti_j\phi_{j,u}^*F_A^- \wedge i_j\phi_{j,t}^*F_A^+du\rangle
= \langle F_{A}^-,  \phi_{j,t}^*\int_{-t}^0i_j\phi_{j,u}^*F_A^- \wedge i_jF_A^+du\rangle.\end{equation}
$$S_j'(0)= S_j''(0)= 0.$$

 In order to use the variational inequality (\ref{pancake}) we need to estimate $S_j(t)$. 

Taylor expanding $S_j'$ gives 
$$S_j'(t) =  \langle F_{A}^-, \sum_{a,b\geq 0}^{a+b=m}\frac{t^a}{a!}L_j^a\int_{-t}^0i_j\frac{u^b}{b!}L_j^bF_A^- \wedge i_jF_A^+du\rangle + O(t^{m+2})$$
Hence 
$$S_j(t) =  \langle F_{A}^-, \sum_{a,b\geq 0}^{a+b=m}\frac{(-1)^bt^{a+b+2}}{a!(b+1)!(a+b+2)}L_j^a[i_j L_j^bF_A^- \wedge i_jF_A^+]\rangle + O(t^{m+3}) $$
$$=  \langle F_{A}^-, \sum_{a,b\geq N}^{a+b=m}\frac{(-1)^bt^{a+b+2}}{a!(b+1)!(a+b+2)}L_j^a[i_j L_j^bF_A^- \wedge i_jF_A^+]\rangle + O(t^{m+3}) $$
Hence, using $A_N$ to eliminate lower order terms, (\ref{pancake}) implies
\begin{equation}\label{varinduct}
\langle F_{A}^-, (-1)^NL_j^N[i_j L_j^NF_A^- \wedge i_jF_A^+]\rangle \leq 0.\end{equation}

\begin{lemma}\label{onion}If $A_N$ holds then 
$$\langle F_{A}^-, L_j^N[i_j L_j^NF_A^- \wedge i_jF_A^+]\rangle  = 0.$$
\end{lemma}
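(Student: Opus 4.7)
The strategy parallels Proposition \ref{Law}: the plan is to promote the one-sided inequality (\ref{varinduct}) to an equality by summing over a basis of Killing vector fields and invoking a cyclic identity that forces the sum to vanish. First I would integrate by parts on the outer $L_j^N$, using that for each Killing field $X_j$ the extended Lie derivative $L_j$ on $ad(E)$-valued forms is skew-adjoint in $L^2$ (by compactness of $M$, or by the decay hypothesis in the noncompact case). This converts (\ref{varinduct}) to
\[
\langle G_j,\, i_j G_j \wedge i_j F_A^+\rangle \le 0,
\]
where $G_j := L_j^N F_A^-$ is anti-self-dual by (\ref{invarp}).

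Next I would sum over a basis of Killing fields and aim to show that the sum vanishes independently of the variational inequality. The template is the cyclic identity in Lemma \ref{lemma1}: for any anti-self-dual $G$,
\[
\sum_a \langle G,\, i_{e_a} G \wedge i_{e_a} F_A^+\rangle = \sum_a \langle F_A^+,\, i_{e_a} G \wedge i_{e_a} G\rangle = 0,
\]
since the frame sum $\sum_a i_{e_a} G \wedge i_{e_a} G$ is anti-self-dual by Lemma \ref{lemma1} and hence orthogonal to the self-dual $F_A^+$. Pointwise, the isometric evaluation $\mathfrak{k} \to T_m M$ converts the Killing sum into a frame sum on the $i_j$ factors.

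The main obstacle is that the Killing-to-frame conversion does not apply directly to the $j$-dependent factor $G_j = L_j^N F_A^-$. To circumvent this, my plan is to first expand $L_j^N$ on the wedge via Leibniz,
\[
L_j^N\bigl[i_j L_j^N F_A^- \wedge i_j F_A^+\bigr] = \sum_{k=0}^N \binom{N}{k}\, i_j L_j^{N+k} F_A^- \wedge i_j L_j^{N-k} F_A^+,
\]
and then repeatedly integrate by parts, using skew-adjointness of $L_j$ together with $[L_j, i_j]=0$, to transfer the excess Lie derivatives onto the outer $F_A^-$. At each step I would invoke $A_N$ to discard any commutator correction of the form $[L_j^p F_A^+, L_j^q F_A^-]$ with $p+q < N$ that arises. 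After these reductions, the leading-order contribution is frame-summable and vanishes by the cyclic identity above.

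Finally, combining the sign-definite inequality from the first step with the vanishing of the summed quantity forces every summand to equal zero, which yields the lemma.
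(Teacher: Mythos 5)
There is a genuine gap at the step where you sum over a basis of Killing fields and invoke the cyclic identity. Your first move is fine: skew-adjointness of $L_j$ and (\ref{invarp}) do convert (\ref{varinduct}) into $\langle G_j, i_jG_j\wedge i_jF_A^+\rangle\leq 0$ with $G_j=L_j^NF_A^-$ anti-self-dual. But the quantity $\sum_j\langle G_j, i_jG_j\wedge i_jF_A^+\rangle$ is the diagonal restriction of a multilinear form in which the index $j$ occurs $2N+2$ times ($2N$ times in the Lie derivatives and twice in the interior products). The Killing-to-frame conversion used in Proposition \ref{Law}, and the cyclic identity of Lemma \ref{lemma1}, only apply to a \emph{single pairwise contraction} of two interior-product slots with all other factors held fixed; they say nothing about a diagonal sum of degree $2N+2>2$. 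Your proposed workaround --- Leibniz expansion and repeated integration by parts to push derivatives onto the outer $F_A^-$ --- merely redistributes the $2N$ Lie derivatives among the three factors; every resulting term still carries $2N+2$ copies of the index $j$, so the basis sum is still not ``frame-summable,'' and the claimed reduction to the cyclic identity fails for every $N\geq 1$ (which is exactly the range in which the lemma is used).

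The missing idea is to exploit that the sign-definite inequality $(-1)^NS(X,N)\leq 0$ holds for \emph{every} Killing field $X$, not just basis elements, and therefore to \emph{average $S(\sum_k y^kX_k,N)$ over the unit sphere of $\mathfrak{k}$} rather than sum over a basis. Since $S(\sum_k y^kX_k,N)$ is a homogeneous polynomial of degree $2N+2$ in $y$, its spherical average is a linear combination of the coefficients with indices contracted \emph{pairwise}, and each such pairwise contraction can be shown to vanish: $A_N$ lets you replace Lie derivatives by covariant derivatives and discard commutator corrections; the Yang--Mills and Bianchi identities kill contractions of an interior-product index against a derivative index ($\sum_k i_kL_kF_A^{\pm}$) and paired derivative indices ($\sum_kL_k^2F_A^{\pm}\equiv 0$ modulo $A_N$, via the Weitzenb\"ock identity); and Lemma \ref{lemma1} kills the contraction of the two interior-product indices against each other. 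Once the average vanishes, sign-definiteness forces $S(X,N)=0$ for every $X$. Without the passage from the basis sum to the spherical average, the argument does not close.
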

\begin{proof}
Set $S(X_j,N) = \langle F_{A}^-, L_j^N[i_j L_j^NF_A^- \wedge i_jF_A^+]\rangle.$ The inequality 
$(-1)^NS(X_j,N)\leq 0$ holds when $X_j$ is replaced by any Killing vector. We will show that the average of $S(X_j,N)$ over the unit sphere of $\mathfrak{k}$ is zero. Hence $S(X_j,N)$ is zero for each $j$ (and each choice of basis of $\mathfrak{k}$). To see this we consider 
$S(\sum_ky^kX_k,N)$ and integrate the resulting degree $2N+2$ homogeneous polynomial in $y$ over the unit sphere. Integration of homogeneous degree $2N+2$ polynomials over the sphere projects onto the span of the radial function, $(r^2)^{N+1}$. Expanding  in a multi-index notation where $L_J = L_{j_1}\cdots L_{j_{|J|}}$, we write  
$$S(\sum_ky^kX_k,N) = \sum_{|I|=N,|J|=N}\sum_{m,p=1}^{dim\mathfrak{k}}y^Iy^Jy^my^p\langle F_{A}^-, L_I[i_m L_{J}F_A^- \wedge i_pF_A^+]\rangle.$$
That integration over the unit sphere projects onto radial functions implies that upon integration of $S$, we are left with a linear combination of coefficients, $\langle F_{A}^-, L_I[i_m L_{J}F_A^- \wedge i_pF_A^+]\rangle$, of $S$ where the indices are contracted pairwise. We will see that all such contractions vanish.  
The condition $A_N$ allows us to replace the Lie derivatives by covariant derivatives, as the difference vanishes in the inner product. We can also drop commutators of derivatives by $A_N$, as all such commutations drop the degree of the differentiation and thus lead to terms which vanish by $A_N$.  We can then use the Yang-Mills equation and $A_N$ to equate to zero all $\sum_ki_kL_kF_A^{\pm}$ terms and $\sum_kL_k^2F_A^{\pm}$ terms. We also invoke Lemma \ref{lemma1} to remove terms with paired indices $m=p$ on the interior products. Thus $m$ and $p$ must both pair with elements of $I$, for if $p$ pairs with an $L_{j_r}$, we can use the Leibniz formula, $A_N$ and the Yang-Mills equation to eliminate the corresponding term. This leaves two $L_j$ terms which must pair with each other, but since 
$$0 = -(d_Ad_A^*+_A^*d_A)F_A^{\pm} = \sum_j L_j^2F_{A}^{\pm}  \,\,\, \text{  (modulo terms vanishing in the inner product by }A_N),$$
we find the average vanishes. Hence
$$\langle F_{A}^-, L_j^N[i_j L_j^NF_A^- \wedge i_jF_A^+]\rangle  = 0,$$
as claimed.
\end{proof}

Now we apply a variant of Lemma \ref{standardcomp} to obtain a commutation result.

\begin{lemma}If $A_N$ holds then 
$$e^*(i_jL_j^NF_A^+) F_{A}^-   = 0.$$
\end{lemma}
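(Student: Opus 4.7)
The plan is to adapt the perturbation argument of Lemma \ref{standardcomp}, now taking as the ``main'' variation the full family $\psi_s := i_j\int_0^s\phi_{j,u}^{*}F_A^+\,du$ appearing in (\ref{genvar}) rather than a multiple of a fixed $1$-form. Fix $p\in(N+1,N+2)$ and an arbitrary $w\in A^1(M,ad(E))$, and consider the variation $A_s = A + \psi_s + s^p w$. Using (\ref{expand}) together with the computation producing (\ref{genvar}),
$$F_{A_s} = F_A^- + \phi_{j,s}^{*}F_A^+ - 2\Phi_j(s) + s^p d_A w + 2s^p\,\psi_s\wedge w + s^{2p}w\wedge w.$$
Applying $P^-$ (using $P^-\phi_{j,s}^{*}F_A^+=0$), invoking the Yang--Mills identity $\langle F_A^-, d_A w\rangle = 0$, and expanding $\|P^-F_{A_s}\|^2 - \|F_A^-\|^2$ converts (\ref{tautc}) into
$$0 \leq -4S_j(s) + 4\|P^-\Phi_j(s)\|^2 + 4s^p\langle F_A^-,\psi_s\wedge w\rangle + O(s^{2p}) + O(s^{p+N+2}).$$

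The crux is to show (i) $4s^p\langle F_A^-,\psi_s\wedge w\rangle = \frac{4s^{p+N+1}}{(N+1)!}\langle F_A^-,(i_jL_j^NF_A^+)\wedge w\rangle + O(s^{p+N+2})$, and (ii) $S_j(s) = O(s^{2N+3})$. For (i), Taylor-expand $\psi_s = \sum_{k\geq 0}\frac{s^{k+1}}{(k+1)!}i_jL_j^kF_A^+$; the coefficient $\langle F_A^-,(i_jL_j^kF_A^+)\wedge w\rangle$ is pointwise a contraction of $F_A^-$ with $[L_j^kF_A^+, w]$, which by ad-invariance of the fiberwise pairing on $ad(E)$ equals a contraction of $w$ with $[F_A^-, L_j^kF_A^+]$, and this vanishes for $k<N$ by $A_N$. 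For (ii), I use the Taylor expansion of $S_j$ displayed just above (\ref{varinduct}); every summand $\frac{(-1)^bs^{a+b+2}}{a!(b+1)!(a+b+2)}\langle F_A^-, L_j^a[i_jL_j^bF_A^-\wedge i_jF_A^+]\rangle$ with $a+b\leq 2N-1$ vanishes. Indeed, if $b<N$ then $[L_j^bF_A^-, F_A^+]=0$ by $A_N$, so $i_jL_j^bF_A^-\wedge i_jF_A^+ = 0$ as an $ad(E)$-valued $2$-form; otherwise $a<N$, and every Leibniz subterm $i_jL_j^{a'+b}F_A^-\wedge i_jL_j^{a''}F_A^+$ has $a''\leq a<N$, so by ad-invariance its pairing with $F_A^-$ re-expresses as a pairing with $[L_j^{a''}F_A^+, F_A^-]=0$. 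The remaining coefficient at $s^{2N+2}$, with $a=b=N$, is precisely the quantity $S(X_j,N)$ of Lemma \ref{onion} and thus vanishes.

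Combining (i) and (ii), every error term has exponent strictly greater than $p+N+1$, so the variational inequality reduces in the limit $s\to 0^+$ to
$$\frac{4}{(N+1)!}\langle F_A^-,(i_jL_j^NF_A^+)\wedge w\rangle \geq 0.$$
Replacing $w$ by $-w$ forces equality; since $w$ is arbitrary and $\langle F_A^-,(i_jL_j^NF_A^+)\wedge w\rangle = \langle e^{*}(i_jL_j^NF_A^+)F_A^-,w\rangle$, the conclusion $e^{*}(i_jL_j^NF_A^+)F_A^- = 0$ follows. The main obstacle is the index bookkeeping in step (ii): distinguishing the regimes $b<N$ and $a<N$ and invoking the correct cyclic shift of the fiberwise pairing for each, so that $A_N$ eliminates every intermediate Taylor coefficient of $S_j$ not already killed by Lemma \ref{onion}.
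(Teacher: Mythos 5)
Your proof is correct and follows essentially the same route as the paper's: the same test family $A+ i_j\int_0^s\phi_{j,u}^*F_A^+\,du + s^p w$ with $p$ strictly between $N+1$ and $N+2$, the same use of Lemma \ref{onion} (via the Taylor expansion of $S_j$) to obtain $S_j(s)=O(s^{2N+3})$, and the same $w\mapsto -w$ trick at the end. The paper simply fixes $p=N+\tfrac{3}{2}$ and leaves implicit the $A_N$-bookkeeping that you spell out in your steps (i) and (ii).
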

\begin{proof}
Once again we let $\psi$ be a smooth compactly supported $ad(E)$ valued $1-$form. 
Then 
$$\|F_A^-\|^2\leq \|F^-_{A+i_j\int_0^t\phi_{j,s}^*F_A^+ds + t^p\psi}\|^2$$
$$= \|F_A^- -2P^-\Phi_j(t) + t^pd_A\psi + t^{2p}\psi\wedge \psi + 2t^p\psi\wedge i_j\int_0^t\phi_{j,s}^*F_A^+ds\|^2$$
$$= \|F_A^-\|^2 -4S_j(t)  + 4t^p\langle F_A^-,\psi\wedge i_j\int_0^t\phi_{j,s}^*F_A^+ds\rangle +O(t^{2N+4}+t^{2p}).$$
Lemma \ref{onion} implies $S_j(t) = O(t^{2N+3})$. 
Hence Taylor expanding again, we get 
$$0\leq  \sum_{b=0}^N4t^p\langle F_A^-,\psi\wedge \frac{t^{b+1}}{(b+1)!}i_jL_j^bF_A^+\rangle +O(t^{2N+3}+t^{2p}) $$
$$=  4t^p\langle F_A^-,\psi\wedge \frac{t^{N+1}}{(N+1)!}i_jL_j^NF_A^+\rangle +O(t^{2N+3}+t^{2p}+t^{N+p+2}).$$
Choosing $p=N+\frac{3}{2}$ gives 
$$0\leq \langle F_A^-,\psi\wedge  i_jL_j^NF_A^+\rangle.$$
Replacing $\psi$ with $-\psi$ makes the inequality an equality, and we conclude 
$$0 = e^*(i_jL_j^NF_A^+)F_A^-$$
as desired. 
\end{proof}

We now require an algebraic proposition. 

\begin{proposition}\label{propinduct}The assumption $A_N$ and the vanishing of $e^*(i_jL_j^NF_A^+)F_A^-$ for all $j$ implies 
$A_{N+1}$. 
\end{proposition}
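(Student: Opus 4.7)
The plan is to lift the single-vector identity $e^*(i_j L_j^N F_A^+) F_A^- = 0$ to the full family of bracket identities $[\nabla_A^i F_A^+, \nabla_A^j F_A^-] = 0$ with $i+j = N$. I would proceed in three stages: polarize in the Killing vector to obtain a symmetric tensor identity, strip the symmetrization to obtain the endpoint commutator $[\nabla_A^N F_A^+, F_A^-] = 0$, and then run a Leibniz induction to fill in the intermediate weights.

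For the first stage, substitute $X_y = \sum_k y^k X_k$ and expand $e^*(i_{X_y} L_{X_y}^N F_A^+) F_A^- = 0$ as a homogeneous polynomial of degree $N+1$ in $y$. Each coefficient must vanish, giving
$$\operatorname{Sym}_{k_0, \dots, k_N} \; e^*\bigl(i_{X_{k_0}} L_{X_{k_1}} \cdots L_{X_{k_N}} F_A^+\bigr) F_A^- = 0.$$
Because $\mathfrak{k} \to T_m M$ is surjective at each $m$, this descends to a pointwise identity for arbitrary tangent vectors $v_0, \ldots, v_N$. The error in replacing each $L_{X_{k_i}}$ by $\nabla_{X_{k_i}}$ is a sum of terms involving $\nabla_A^a F_A^+$ for $a < N$; inside $e^*(\cdot) F_A^-$ these become brackets $[\nabla_A^a F_A^+, F_A^-]$, all of which vanish by $A_N$. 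Unpacking $e^*$ in components produces the pointwise symmetric identity
$$\operatorname{Sym}_{v_0, \dots, v_N} \; \sum_a \bigl[ (\nabla_{v_1} \cdots \nabla_{v_N} F_A^+)_{v_0 a}, \, (F_A^-)_{a b} \bigr] = 0$$
for each free index $b$.

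For the second stage, I would remove the symmetrization. Swapping two adjacent $\nabla_{v_i}, \nabla_{v_{i+1}}$ produces a Riemann-tensor contraction of $\nabla_A^{N-2} F_A^+$ plus a term of the form $[F_A, \nabla_A^{N-2} F_A^+]$; both vanish against $F_A^-$ by repeated application of $A_N$, Jacobi, and Proposition \ref{Law}, so the indices $v_1, \ldots, v_N$ may be freely reordered. To move the contracted form index $v_0$ past the derivatives, I would invoke the Bianchi identity $\nabla_{[c}(F_A^+)_{ab]} = 0$ (valid because $d_A F_A^+ = 0$ under the conservative decomposition) together with the Yang--Mills equation $\sum_a \nabla_a (F_A^+)_{a b} = 0$, picking up only Leibniz tails that once again lie in the zero ideal supplied by $A_N$. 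The result is the un-symmetrized tensor identity $[\nabla_A^N F_A^+, F_A^-] = 0$. Controlling the combinatorics of these index rearrangements and verifying that every correction term is actually absorbed by $A_N$ is the main technical obstacle.

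For the third stage, I would propagate by Leibniz. Induct on $k \in \{1, \dots, N\}$ with the inductive hypothesis $[\nabla_A^{N-k+1} F_A^+, \nabla_A^{k-1} F_A^-] = 0$; the base case $k=1$ is the endpoint commutator just obtained. The $A_N$ relation $[\nabla_A^{N-k} F_A^+, \nabla_A^{k-1} F_A^-] = 0$ holds because $(N-k) + (k-1) < N$. Applying an arbitrary covariant derivative $\nabla_V$ and using that $\nabla_A$ is a derivation of the Lie bracket on $ad(E)$ yields
$$0 = [\nabla_A^{N-k+1} F_A^+, \nabla_A^{k-1} F_A^-] + [\nabla_A^{N-k} F_A^+, \nabla_A^k F_A^-].$$
The first summand vanishes by the inductive hypothesis, so $[\nabla_A^{N-k} F_A^+, \nabla_A^k F_A^-] = 0$, which is precisely the hypothesis for the next step. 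Iterating $k = 1, \ldots, N$ establishes every weight-$N$ commutator, and together with $A_N$ this gives $A_{N+1}$.
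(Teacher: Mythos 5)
Your overall strategy coincides with the paper's: polarize the identity $e^*(i_jL_j^NF_A^+)F_A^-=0$ over the Killing algebra, replace Lie derivatives by covariant derivatives modulo $A_N$, deduce the full unsymmetrized commutator $[\nabla_A^NF_A^+,F_A^-]=0$, and then fill in the intermediate weights by differentiating the $A_N$ relations with the Leibniz rule. Your first and third stages are sound and match what the paper does (the third stage is exactly the paper's closing line ``via application of $A_N$'').

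The gap is your second stage, and it is not a technicality you can defer --- it is the entire content of the proposition. The polarized identity only controls the \emph{symmetrization} over the $N+1$ slots $v_0,\dots,v_N$, one of which is a form index of $F_A^+$ and $N$ of which are derivative directions. Permuting the derivative indices among themselves is indeed free modulo $A_N$, but breaking the symmetry between the contracted form index $v_0$ and the derivative indices is not: a vanishing symmetric multilinear form does not force its unsymmetrized components to vanish without additional relations, and ``invoke Bianchi and Yang--Mills, picking up only Leibniz tails'' does not by itself produce them. The paper's Section on the Algebraic Reduction exists precisely to close this gap: it introduces the polarized commutators $p_{ik}(u)=[(u^j\nabla_j)^NF^+_{1i},F^-_{1k}]$, uses four-dimensional self-duality to reduce all components of $F^{\pm}$ to the $F^{\pm}_{1i}$, and derives a closed, overdetermined linear system --- the four polarized equations, nine Bianchi-type relations among the $p_{ik,a}$, nine further relations obtained by shifting the derivative onto $F^-$ via $A_N$, and the Euler homogeneity relation $p_{ik}=\frac{1}{N}u^ap_{ik,a}$ --- whose only solution is $p_{ik}\equiv 0$. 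That this system is rigid enough to force vanishing is a genuine computation specific to the self-dual/anti-self-dual decomposition in dimension four; you have identified where the difficulty lies (``the main technical obstacle'') but have not resolved it, so the proof as written is incomplete at its central step.
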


The proof of this proposition is the goal of the next section.

\subsection{An Algebraic Reduction}
In this subsection, we will prove Proposition \ref{propinduct}. 
\begin{proof}
The Lie derivative $L_j$ differs from the covariant derivative $\nabla_j$ by zero order terms which commute with $ad(E)$. Hence we have 
\begin{equation}\label{algreduce1}
0 = e^*(i_j\nabla_j^NF_A^+)F_A^-.
\end{equation}
Fix a point $x$ and a basis for the infinitesimal isometries so that 
$X_j(x)=e_j$, $j=1,2,3,4$ is an orthonormal basis of $T_xM$.
Then expanding (\ref{algreduce1}) in this frame, yields for all $k$ and $t$, 
\begin{equation}\label{algreduce2}\sum_s[\nabla_k^NF^+_{ks},F^-_{st}] = 0\qquad\text{(no k sum).}\end{equation}
In fact, replacing $e_k$ by $u^1e_1 + \cdots u^4e_4$, we have 
\begin{equation}\label{reducta} \sum_{a,s}[(u^j\nabla_j)^Nu^aF^+_{as},F^-_{st}] = 0.
\end{equation}
Set 
$$p_{ik}(u) = [(u^j\nabla_j)^NF^+_{1i},F^-_{1k}],$$
and let $p_{ik,a} := \frac{\p}{\p u^a}p_{ik}.$
Then we may expand (\ref{reducta}) as 
\begin{align}\label{peq1} 0 &=  u^1(p_{22}+p_{33}+p_{44}) - u^2(p_{34}-p_{43}) + u^3(p_{24}-p_{42}) - u^4(p_{23} - p_{32}).\\
\label{peq2} 0 &=   u^1(p_{34} - p_{43})+ u^2(-p_{22} + p_{44} + p_{33})
- u^3(p_{32} + p_{23}) - u^4(p_{42} + p_{24}).\\
\label{peq3} 0 &=  u^1(-p_{24} + p_{42})- u^2(p_{23} + p_{32})+ u^3(- p_{33} + p_{44}
  + p_{22})  - u^4(p_{43a} + p_{34}).\\
\label{peq4} 0 &=  u^1(p_{23} - p_{32}) -u^2(p_{24} + p_{42}) - u^3(p_{34} + p_{43})+ u^4(-p_{44} + p_{33}
+ p_{22}).\end{align}

In addition to these equations, we have the Yang-Mills equations and the Bianchi identities. These are best encoded as relations between the derivatives of the $p_{ij}$ as follows.  
$$p_{ik,a}(u) = N[(u^j\nabla_j)^{N-1}F^+_{1i,a},F^-_{1k}]
 = N[(u^j\nabla_j)^{N-1}F^+_{1a,i},F^-_{1k}] + N[(u^j\nabla_j)^{N-1}F^+_{ai,1},F^-_{1k}].$$
$$= p_{ak,i}(u) + N[(u^j\nabla_j)^{N-1}F^+_{ai,1},F^-_{1k}].$$
This reduces to $9$ equations. 
$$p_{3k,2}  = p_{2k,3}  + p_{4k,1}.$$
$$p_{4k,3}  = p_{3k,4}  +  p_{2k,1} .$$
$$p_{2k,4}  = p_{4k,2}  +  p_{3k,1}.$$

We may also use  $A_N$ to shift the derivative to the $F_A^-$ term, yielding the following relations among the derivatives. 
$$p_{ik,a} = N[(u^j\nabla_j)^{N-1}F^+_{1i,a},F^-_{1k}] = 
-N[(u^j\nabla_j)^{N-1}F^+_{1i},F^-_{1k,a}]$$
$$ = -N[(u^j\nabla_j)^{N-1}F^+_{1i},F^-_{ak,1}] -N[(u^j\nabla_j)^{N-1}F^+_{1i},F^-_{1a,k}]$$
$$ = N[(u^j\nabla_j)^{N-1}F^+_{1i,1},F^-_{ak}] + p_{ia,k}.$$
This yields $9$ additional equations.

$$p_{24,2}- p_{22,4} = p_{23,1}.$$
$$p_{22,3}-p_{23,2} = p_{24,1}.$$
$$p_{23,4}-p_{24,3} = p_{22,1}.$$
$$p_{34,2}-p_{32,4} = p_{33,1}.$$
$$p_{32,3}-p_{33,2} = p_{34,1} .$$
$$p_{33,4}-p_{34,3} = p_{32,1}.$$
$$p_{44,2}-p_{42,4} = p_{43,1} .$$
$$p_{42,3} - p_{43,2}= p_{44,1} .$$
$$p_{43,4}-p_{44,3} = p_{42,1}  .$$

We get still more equations from considering $d_A^*F_A^{\pm}$. Most but not all of these are dependent on the preceding relations. 

Differentiating (\ref{peq1}) - (\ref{peq4}) in $u^j$, $j= 1,2,3,4$ gives 16 equations. Substituting the above relations into these equations and using the homogeneity relation 
$$p_{ik} = \frac{1}{N}u^ap_{ik,a},$$
allows us to show 
$$p_{ik} = 0,$$
all $i,k$. 
We conclude: for all $i,k$, 
$$0 = [(u^j\nabla_j)^NF^+_{1i},F^-_{1k}].$$
Hence for all $i,k,s,t$
$$0 = [\nabla^NF^+_{si},F^-_{tk}].$$
Finally we have, via application of $A_{N}$, that for $0\leq a\leq N$
$$0 = [\nabla^aF^+_{si},\nabla^{N-a}F^-_{tk}].$$
Hence $A_{N+1}$ holds.\end{proof}
This completes our proof of Proposition \ref{propinduct}. 
\subsection{Splitting} 
Define subsheaves of the sheaf of sections of $ad(E)$ by setting 
$k_x^+\subset ad(E)_x$ (respectively $k_x^-$) to be the subspace generated by the components of $\nabla^aF^+(x)$, (respectively $\nabla^aF^-(x)$) as $a$ runs through all nonnegative integers. By definition, the subsheaves $\mathcal{K}^{\pm}$ of sections of $ad(E)$ which take values in $k^{\pm}$  are preserved by the connection. This gives a reduction of the adjoint bundle.  

Again we let $A$ also denote the local connection form.
Then choosing a gauge with $d_A^*A = 0$ (see, for example, \cite{u1} or \cite{w} for the existence of such gauges) the Yang-Mills equations become a nonlinear elliptic system for $A$. The homogeneous manifolds we are considering are all real analytic, and this system has analytic coefficients. Hence $A$ and $F_A$ are real analytic in this gauge (See \cite{cbm}. See also \cite[Chapter V, Theorem 1.1]{JT} ). 

Fix a point $o$ in $M$, and let $X_1(x),\cdots,X_d(x)$ be Lie polynomials in components of $\nabla_A^aF_A^+(x)$, all $a$,
such that $\{X_1(o),\cdots,X_d(o)\}$ is a basis of $k_o^+$. Then $X_1(x),\cdots,X_d(x)$ are linearly independent in a neighborhood of $o$. Suppose that these vectors do not span $k_x^+$, $x$ near $o$. Then there exists a Lie polynomial, $X_{d+1}$, constructed from the components of the covariant derivatives of $F_A^+$ which, at $x$, is linearly independent of $X_1(x),\cdots,X_d(x)$.  
Then 
$X_1\wedge \cdots\wedge X_d\wedge X_{d+1}$ vanishes to infinite order at $o$. By analyticity, this implies
$X_1\wedge \cdots\wedge X_d\wedge X_{d+1}$ is identically zero in a neighborhood of $o$, contradicting our assumption. Thus we see that $k^+$ and $k^-$ define subbundles of $ad(E)$. This gives the following theorem. 
\begin{theorem}\label{mainth}
Let $E$ be a principal $G-$bundle on a compact oriented homogeneous Riemannian manifold, $M$.
Let $A$ be a Yang-Mills minimizing connection on $E$.
The adjoint bundle, $ad(E)$, contains two $\nabla_A-$stable subbundles, $k^+$ and $k^-$, satisfying 
$F_A^{\pm}$ is a section of $\Lambda^2T^*M\otimes k^{\pm}$,
$$[k^+,k^-] = 0,$$
and the curvature of $k^+$ is self-dual and that of $k^-$ is anti-self-dual.
If $M$ is assumed to be nonnegatively curved instead of compact, then the same result holds provided that we assume that for each $a\geq 0$, $\nabla_A^aF_A\in L^2\cap L^4$.
\end{theorem}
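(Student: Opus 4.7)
My plan is to assemble the ingredients developed throughout Section \ref{4dim} and verify the theorem's claims in order. First I would complete the induction: Proposition \ref{Law} gives the base case $A_1$, and Proposition \ref{propinduct} supplies the inductive step $A_N\Rightarrow A_{N+1}$, so $A_N$ holds for every $N\geq 1$, giving $[\nabla_A^iF_A^+,\nabla_A^jF_A^-]=0$ for all nonnegative $i,j$. Following the splitting discussion above, I would then define $k_x^\pm\subset ad(E)_x$ to be the Lie subspace generated by the components of $\nabla_A^aF_A^\pm(x)$ as $a$ ranges over $\{0,1,2,\ldots\}$. By construction $F_A^\pm$ is a section of $\Lambda^2T^*M\otimes k^\pm$, the associated subsheaf is preserved by $\nabla_A$ (a covariant derivative of a generator is a generator, and a commutator of generators is by definition in $k^\pm$), and $[k^+,k^-]=0$ follows by applying the full inductive hypothesis to arbitrary Lie polynomials in the generators.

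The main obstacle --- and the place where homogeneity is used essentially --- is to upgrade $k^\pm$ from subsheaves of $ad(E)$ to genuine subbundles of locally constant rank. For this I would follow the sketch at the end of Section \ref{4dim}: pick a Coulomb gauge $d_A^*A=0$ via Uhlenbeck, observe that on a homogeneous (hence real-analytic) Riemannian manifold the resulting elliptic Yang-Mills system has analytic coefficients, and conclude that $A$ and $F_A$ are real-analytic in this gauge. Fixing a base point $o$ and Lie polynomials $X_1,\ldots,X_d$ in components of $\nabla_A^aF_A^+$ whose values span $k_o^+$, the $X_i$ are automatically linearly independent in a neighborhood; if a further polynomial $X_{d+1}$ were linearly independent of them at some nearby $x$, the wedge $X_1\wedge\cdots\wedge X_{d+1}$ would vanish to infinite order at $o$ yet be nonzero at $x$, contradicting analyticity. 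This yields local constancy of $\dim k_x^\pm$.

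The self-duality claims fall out immediately: the curvature of the induced connection on the subbundle $k^\pm$ equals $\mathrm{ad}(F_A)$ restricted to $k^\pm$, which splits as $\mathrm{ad}(F_A^+)+\mathrm{ad}(F_A^-)$; since $F_A^\mp$ takes values in $k^\mp$ and $[k^+,k^-]=0$, the $F_A^\mp$ term annihilates $k^\pm$, leaving only the (anti-)self-dual piece. For the noncompact nonnegatively-curved case, the hypothesis $\nabla_A^aF_A\in L^2\cap L^4$ is exactly what is needed so that the test $1$-forms appearing throughout Propositions \ref{Law} and \ref{propinduct} --- built from iterated covariant derivatives of $F_A$ contracted with Killing fields --- themselves lie in $L^2\cap L^4$, validating each variational inequality via the cutoff argument of the remark following (\ref{secvar}).
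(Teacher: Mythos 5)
Your proposal is correct and follows essentially the same route as the paper: complete the induction via Propositions \ref{Law} and \ref{propinduct}, define $k^{\pm}$ as the subsheaves generated by components of $\nabla_A^aF_A^{\pm}$, upgrade them to subbundles by the Coulomb-gauge/real-analyticity argument, and read off the (anti-)self-duality of the curvature from $[k^+,k^-]=0$. The only addition worth noting is your explicit remark that $[k^+,k^-]=0$ for Lie polynomials follows from the commutators of generators via the Jacobi identity, which the paper leaves implicit.
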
\noindent
\begin{proof} The connection $\nabla_A$ preserves $k^{\pm}$ by construction. Hence the curvature of each subbundle is simply the restriction of $F_A$ to the subbundle. As $F_A^{\pm}$ acts trivially on $k^{\mp}$, the curvature operator on $k^{\pm}$ is $F_A^{\pm}$ yielding the asserted self-duality and anti-self-duality.\end{proof} 

\begin{corollary}Let $M$ be a compact homogeneous 4-manifold. Let $E$ be a principal $G-$bundle over $M$ with Yang-Mills minimizing connection $A$. Suppose the first Pontrjagin number of $E$ is greater than or equal to zero (respectively less than or equal to zero). Then if the Yang-Mills energy of $A$ is greater than the topological lower bound determined by the first Pontrjagin number of $E$, $ad(E)$ has a nontrivial subbundle with anti-self-dual curvature (respectively self-dual curvature).
\end{corollary}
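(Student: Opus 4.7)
The plan is to read the Corollary directly off Theorem~\ref{mainth} once the Chern--Weil identity is invoked to make the ``topological lower bound'' explicit. Recall from Section~2 that $\|F_A^+\|^2 - \|F_A^-\|^2$ is a fixed positive multiple, say $c$, of the first Pontrjagin number $p_1(E) = \int_M p_1(A,E)$, and in particular is independent of~$A$. Writing
\begin{equation*}
YM(A) \;=\; \|F_A^+\|^2 + \|F_A^-\|^2 \;=\; c\,p_1(E) + 2\|F_A^-\|^2 \;=\; -c\,p_1(E) + 2\|F_A^+\|^2,
\end{equation*}
one reads off the topological bound $YM(A) \geq c\,p_1(E)$ when $p_1(E)\ge 0$ (resp.\ $YM(A) \geq -c\,p_1(E)$ when $p_1(E)\le 0$), with equality precisely when $F_A^- \equiv 0$ (resp.\ $F_A^+ \equiv 0$).

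Next, I would handle the case $p_1(E) \ge 0$; the opposite case is entirely symmetric. The hypothesis that $YM(A)$ strictly exceeds the bound forces $\|F_A^-\|^2 > 0$, so $F_A^-$ is not identically zero. Theorem~\ref{mainth} supplies a $\nabla_A$-stable subbundle $k^- \subset ad(E)$ such that $F_A^-$ is a section of $\Lambda^2 T^*M \otimes k^-$ and whose curvature (the restriction of $F_A$ to $k^-$) is anti-self-dual. The only remaining issue is showing that $k^-$ is nontrivial, i.e.\ has positive rank. But the construction of $k^-$ in the proof of Theorem~\ref{mainth} generates its fibers from the components of $\nabla_A^a F_A^-$; since $F_A^-$ does not vanish identically, the fiber $k_x^-$ is nonzero at some point $x$, and the analyticity/constant-rank argument already carried out in Section~\ref{4dim} then ensures $k^-$ has positive rank as a subbundle. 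Thus $k^-$ is the desired nontrivial subbundle with anti-self-dual curvature.

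The case $p_1(E) \le 0$ is handled by interchanging the roles of $F_A^+$ and $F_A^-$ in the same argument, producing a nontrivial $\nabla_A$-stable subbundle $k^+$ whose curvature is self-dual. There is essentially no obstacle beyond unpacking Theorem~\ref{mainth}; the only conceptual point worth emphasizing is that the ``topological lower bound'' in the statement is precisely the Chern--Weil bound above, and that \emph{strict} inequality between $YM(A)$ and this bound is exactly what rules out the self-dual (resp.\ anti-self-dual) alternative and activates the nontrivial part of the splitting supplied by Theorem~\ref{mainth}.
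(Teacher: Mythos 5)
Your proposal is correct and is exactly the intended derivation: the paper states this corollary without proof as an immediate consequence of Theorem~\ref{mainth}, and your argument—identifying the topological lower bound via the Chern--Weil identity $\|F_A^+\|^2 - \|F_A^-\|^2 = c\,p_1(E)$, observing that strict inequality forces $F_A^-$ (resp.\ $F_A^+$) to be nonzero, and then invoking the nontriviality of the corresponding subbundle $k^{\mp}$ from the splitting construction—is precisely how one unpacks it. No gaps.
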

\section{Dimension 3}
We now consider applications of the variational inequality to three dimensions. 
\begin{theorem}\label{th3}Let $Y$ be a complete three dimensional manifold with nonnegative Ricci curvature. Let $A$ be a Yang-Mills minimizing connection on a $G$-bundle $E$ on $Y$. If $M$ is noncompact, assume further that
$F_A\in L^4$. Then 
$$\nabla_A F_A = 0,$$
and 
$$Ric(\ast F_A,\ast F_A) = 0.$$
Moreover, $F_A$ takes values in a flat commutative subbundle of $ad(E)$, 
and $F_A=0$ unless $M$ has local flat factors. 
\end{theorem}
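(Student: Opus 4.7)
The natural test object is $\psi := \ast F_A$, which in three dimensions is an $ad(E)$-valued 1-form. Combining the Yang-Mills equation $d_A^*F_A = 0$ with the identity $d_A^* = -\ast d_A\ast$ on 2-forms gives $d_A\psi = 0$, and dually the Bianchi identity $d_AF_A = 0$ gives $d_A^*\psi = 0$; thus $\psi$ is harmonic. Inserting $\psi$ into the second variation inequality (\ref{secvar})---in the noncompact case using the cutoff procedure of the remark following it, justified by $F_A\in L^2\cap L^4$ (the $L^2$ bound being implicit in finite Yang-Mills energy, the $L^4$ bound being a hypothesis)---the $\|d_A\psi\|^2$ term drops out and leaves
\[
0 \le 2\int \langle F_A, \ast F_A \wedge \ast F_A\rangle.
\]

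The second ingredient is the Bochner-Weitzenbock identity for $ad(E)$-valued 1-forms, which in a local orthonormal frame reads
\[
(d_Ad_A^* + d_A^*d_A)\psi_k = \nabla_A^*\nabla_A\psi_k + \mathrm{Ric}_k{}^m\psi_m + \sum_i [F_{ik},\psi^i].
\]
A short calculation using the wedge-product convention fixed in Section 2 together with invariance of the pairing shows that the curvature-coupling term, paired with $\psi$ and integrated, equals $2\int\langle F_A,\psi\wedge\psi\rangle$. Since $\psi$ is harmonic, integration of the Weitzenbock identity yields
\[
0 = \|\nabla_A\psi\|^2 + \int \mathrm{Ric}(\psi,\psi) + 2\int\langle F_A, \ast F_A\wedge \ast F_A\rangle,
\]
and each summand on the right is nonnegative---the last by the variational inequality just derived, the middle by the Ricci hypothesis, the first trivially---so every summand vanishes. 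In particular $\nabla_AF_A = 0$ and $\mathrm{Ric}(\ast F_A, \ast F_A)\equiv 0$.

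The integrated vanishing of $\langle F_A,\psi\wedge\psi\rangle$ together with $d_A\psi = 0$ verifies the hypotheses of Lemma \ref{standardcomp} (applied with $P^-$ the identity, since no self-dual decomposition is available in three dimensions), yielding $e^*(\psi)F_A = 0$ pointwise. A direct expansion in a local orthonormal frame, using the three-dimensional duality relations $F_{23} = \psi_1$, $F_{13} = -\psi_2$, $F_{12} = \psi_3$, shows this is equivalent to $[\psi_i,\psi_j] = 0$ for all $i,j$, so $F_A$ takes values pointwise in an abelian subalgebra of $ad(E)$. Let $k\subset ad(E)$ be the subbundle generated by the components of $F_A$. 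Since $\nabla_AF_A = 0$, $k$ is $\nabla_A$-stable; since $F_A$ lives in the abelian bundle $k$, the induced curvature $[F_A,\cdot]|_k$ vanishes, so $k$ is flat. If $F_A\not\equiv 0$, the flatness of $k$ provides a local parallel orthonormal frame $\{\xi_a\}$, and writing $F_A = \sum_a F^a\otimes\xi_a$ with scalar 2-forms $F^a$ one sees each $F^a$ is parallel and at least one is nonzero; then $\ast F^a$ is a nonzero parallel scalar 1-form dual to a nonzero parallel vector field $V$, and the de Rham local decomposition theorem splits $M$ isometrically as $\IR\times N$ near any point, with the $\IR$-factor (tangent to $V$) flat.

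The one genuinely delicate step is pinning down the sign of the curvature-coupling term in the Bochner-Weitzenbock identity so that it matches the sign of the term forced nonnegative by the variational inequality; if the signs were reversed, the two inputs would merely be consistent and would not enforce any vanishing. Once the matching sign is confirmed by direct component computation, the rest is routine bookkeeping about parallel transport and local splitting.
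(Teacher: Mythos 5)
Your proof is correct and follows essentially the same route as the paper: the test variation $\psi=\ast F_A$ in the second-variation inequality, the Weitzenb\"ock identity (which you apply to the harmonic $1$-form $\ast F_A$ rather than to the $2$-form $F_A$, an equivalent formulation since $\ast$ is parallel), the resulting term-by-term vanishing, Lemma \ref{standardcomp} with $P^-=1$ for commutativity, and the parallel-frame/de Rham splitting argument for the flat factors. The sign-matching of the curvature-coupling term that you flag as the delicate point is indeed the crux, and it comes out as you state, in agreement with the paper's Bochner computation.
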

\noindent
\begin{proof} The variational inequality gives 
$$\|F_A\|^2 \leq \|F_{A + t\ast F_A}\|^2 = \|F_A + t^2\ast F_A\wedge \ast F_A\|^2.$$
Hence,
\begin{equation}\label{var3d}0\leq \langle F_A, \ast F_A\wedge \ast F_A\rangle.\end{equation}
 
On the other hand combining the Yang-Mills equation and the Bochner formula, we have 
$$0 = \|d_A F_A\|^2 + \|d_A^*F_A\|^2 = \|\nabla_A F_A\|^2 - \langle \sum_{ij}a_ia_j^*R_{ij}F_A,F_A\rangle 
- \langle \sum_{ij}a_ia_j^*[F_{ij},F_A],F_A\rangle $$
$$ = \|\nabla_A F_A\|^2 + \int_Y Ric (\ast F_A  , \ast F_A)dv + 2\langle F_A, (\ast F_A)\wedge (\ast F_A)\rangle. $$
Here $R$ denotes the Riemann curvature.  
 
Thus  if the Ricci curvature of $Y$ is nonnegative, we conclude from this Bochner formula and inequality (\ref{var3d}) that
$$0 = \langle F_A, (\ast F_A)\wedge (\ast F_A)\rangle,$$
and 
\begin{equation}\label{const}\nabla_A F_A = 0.\end{equation}
If the Ricci curvature is strictly positive at some point, then $F_A=0$.
Equation (\ref{const}) implies that the subbundle $H$ of $ad(E)$ generated by the components of $F_A$ is stable under $\nabla_A$.
 
Applying Lemma \ref{standardcomp} with $P^- = 1 $ and $\psi = \ast F_A$, we deduce
$$0 = [F_{ij},F_{st}],$$
for all $i,j,s,t.$
Hence $H$ is a commutative flat subbundle of $ad(E)$. 
Now $\nabla_AF_A = 0 $ implies 
the Riemannian curvature acts trivially on the subbundle of $T^*M$ determined by $F_A$. 
Thus $M$ has local flat factors unless $F_A = 0.$

\end{proof}

 Up to diffeomorphism, the only simply connected three manifold with strictly positive Ricci curvature is $S^3$ (see \cite{h}). Uhlenbeck's compactness theorem (\cite{u1}) implies that every $G-$bundle on a compact 3 manifold has a Yang-Mills minimizing $G-$connection. Theorem \ref{th3} then implies the minimizing connection is flat on $S^3$ and its smooth finite quotients. On $S^3$ it is therefore trivial as a $G-$bundle. As 
$G$-bundles on $S^3$ are classified by $\pi_2(G)$, this gives an analytic proof of the well known fact that $$\pi_2(G) = 0$$ for all compact connected Lie groups. (See \cite[Section 18]{bo}). We similarly deduce that all $G-$bundles on $T^3$ admit $G-$connections with covariant constant curvature.

\section{Calabi--Yau 3 folds} 
Let $M$ be a compact Calabi Yau 3 fold, with Kahler form $\omega$ and nonzero covariant constant $(3,0)$ form $\Omega.$  Let $A$ be a minimizing connection.  

Decompose the curvature, $F_A$ as 
$$F_A = F_A^{2,0} + F^{1,1}_{A0} + \phi_A\omega +F_A^{0,2},$$
where 
$$\phi_A(x) = \frac{1}{3}\langle F_A,\omega\rangle (x).$$
Recall that the Bianchi equations imply 
\begin{equation}\label{cybianch}0 = \db_A F_{A0}^{1,1} + \db_A\phi_A\wedge\omega + \partial_A F^{0,2}_A.\end{equation}
Let $\Lambda = e^*(\omega)$.  The Yang-Mills equations coupled to the Kahler identities imply 
\begin{equation} \label{cyym}0 = -i\Lambda \db_A F_{A0}^{1,1} - i\Lambda (\db_A\phi_A\wedge\omega) + 3i\db_A\phi_A + i\Lambda\partial_A F^{0,2}_A.\end{equation}
Combining (\ref{cybianch}) and (\ref{cyym}) gives first that 
$$\db_A^*F^{0,2}_A = -3\db_A\phi_A.$$
Taking inner products, we have 
$$\|\db_A^*F^{0,2}_A\| = -3\langle \db_A^*F^{0,2}_A,\db_A\phi_A\rangle = 
-3\langle F^{0,2}_A,\db_A^2\phi_A\rangle = -3Re\langle F^{0,2}_A,[F^{0,2}_A,\phi_A]\rangle = 0.$$
Here we are using that $\phi_A$ is real and $Re[\bar F_{st}^{0,2},F_{st}^{0,2}] = 0.$ 
Hence 
$$d_A^*F_A^{0,2} = 0 = \nabla_A\phi_A.$$
Thus the splitting of $ad(E)$ into eigenspaces of $ad(\phi_A)$ is preserved by the connection.

Define an $ad(E)$ valued one form $\psi$ so that 
\begin{equation}\label{defpsi}e^*(\psi)\bar \Omega = F_A^{0,2}.
\end{equation} 
Applying $d_A^*$ to each side of (\ref{defpsi}) gives 
$$e^*(\db_A\psi)\bar \Omega = 0,$$ 
and therefore 
$$\db_A\psi = 0.$$
Applying $d_A$ to both sides of (\ref{defpsi}) gives  
$$\db_A^*\psi = 0.$$

On a Calabi--Yau, we may reexpress the Yang-Mills energy as 
$$\|F_A\|^2 = -\int_M tr F_A\wedge \ast F_A
 = - \int_M tr (-F_{A0}^{1,1}\wedge F_{A0}^{1,1} + 2F_A^{2,0}\wedge F_A^{0,2} + \phi_A\omega\wedge \phi_A \omega]\wedge \omega$$
$$=  4\|F_A^{0,2}\|^2 + 2\|\phi_A\omega\|^2  + \int_M tr(F_A\wedge F_A)\wedge\omega.$$ 

The last term is independent of $A$. Hence, if we define $P^-$ to be the projection onto the $(0,2)+(2,0)+ \langle\omega\rangle$ summands, then we see that it defines a conservative decomposition. We therefore have the consequent additional variational inequalities.

Consider the variation $A+t(\psi+ \bar\psi)$ to get 
$$0\leq 2Re\langle F_A^{0,2},\psi\wedge \psi\rangle +  \frac{2}{3}\|\Lambda d_A(\psi + \bar\psi)\|^2  + \frac{4}{3}Re\langle (\Lambda F_A)\omega,(\psi + \bar\psi)\wedge (\psi + \bar\psi)\rangle$$
$$= 2Re\langle F_A^{0,2},\psi\wedge \psi\rangle  + 8Re\langle \phi_A\omega, \psi \wedge \bar\psi\rangle $$
$$= 2Re\langle F_A^{0,2},\psi\wedge \psi\rangle.$$
Here we have used 
$$\nabla_A\phi_A = 0\Rightarrow [F_A,\phi_A]= 0\Rightarrow [\bar \psi,\phi_A] = 0.$$

Considering instead the variation 
$A+it(\psi- \bar\psi)$, we obtain 
$$0\leq  -2Re\langle F_A^{0,2},\psi\wedge \psi\rangle.$$
Hence the inequalities are equalities, and we may deduce from Lemma \ref{standardcomp} that 
$$e^*(\psi)F_A^{0,2} = 0.$$
In components, this is equivalent to 
\begin{equation}\label{comcy}0 = [F_{st}^{0,2},F_{ab}^{0,2}],\end{equation}
all $s,t,a,b.$    
The components of $F_A^{0,2}$  thus generate an abelian subalgebra $\mathfrak{a}$ of $ad(E)\otimes \IC.$ 
We summarize the computations in this section with a theorem.
\begin{theorem}\label{cyth}
Let $A$ be a smooth minimizing Yang Mills connection on a Calabi Yau threefold. If $M$ is noncompact, assume further that  $ F_A\in L^4$. Then $F_A^{0,2}$ takes values in a commutative subbundle of $ad(E)\otimes \IC.$ 
\end{theorem}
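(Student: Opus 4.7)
The plan is to construct an $ad(E)$-valued $1$-form $\psi$ from $F_A^{0,2}$ using the covariant constant holomorphic volume form, verify that $\psi$ satisfies the hypotheses of Lemma \ref{standardcomp} for a suitable conservative decomposition, and then apply two carefully chosen variations to conclude the commutation relation $[F_{st}^{0,2}, F_{ab}^{0,2}] = 0$.

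First, I would exploit the nonzero covariant constant $(3,0)$-form $\Omega$ to define $\psi \in A^{0,1}(M, ad(E))$ by $e^*(\psi)\bar\Omega = F_A^{0,2}$. Since $\bar\Omega$ is parallel and pointwise nondegenerate as a map from $(0,1)$-forms to $(0,2)$-forms, this defines $\psi$ uniquely, and differentiating both sides would transfer information about $F_A^{0,2}$ to $\psi$. The preliminary analytic step is to combine the Bianchi identity $d_A F_A = 0$ with the Yang--Mills equation and the Kähler identities to derive $\bar\partial_A^* F_A^{0,2} = -3\bar\partial_A\phi_A$ where $\phi_A := \tfrac{1}{3}\langle F_A, \omega\rangle$. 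Pairing this relation against $\bar\partial_A\phi_A$ and using reality of $\phi_A$ gives $\nabla_A\phi_A = 0$ and $d_A^* F_A^{0,2} = 0$, from which $\bar\partial_A\psi = 0$ and $\bar\partial_A^*\psi = 0$ follow by applying $d_A$ and $d_A^*$ to the definition of $\psi$. In particular, $\phi_A$ commutes with $F_A$ (and hence with $\psi$ and $\bar\psi$).

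Next I would install a conservative decomposition. Expanding $\|F_A\|^2$ using the Kähler identities and the curvature decomposition yields
$$\|F_A\|^2 = 4\|F_A^{0,2}\|^2 + 2\|\phi_A\omega\|^2 + \int_M \mathrm{tr}(F_A\wedge F_A)\wedge\omega + \|F_{A0}^{1,1}\|^2.$$
The third term is a topological (or, in the noncompact $L^4$ case, variationally invariant) quantity, so the projection $P^-$ onto the $(2,0)\oplus \langle\omega\rangle\oplus(0,2)$ components defines a conservative decomposition in the sense of (\ref{topol}). This yields the refined variational inequality (\ref{secvarc}) and places $\psi$ in the scope of Lemma \ref{standardcomp}, provided I can verify $P^-d_A\psi = 0$ and $\langle P^-F_A, \psi\wedge\psi\rangle$ has vanishing real part.

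Finally, I would test with the two variations $A + t(\psi+\bar\psi)$ and $A + it(\psi-\bar\psi)$. Expanding $\|P^-F_{A+\cdot}\|^2$ with (\ref{expand}) and using $\bar\partial_A\psi = 0 = \bar\partial_A^*\psi$ kills the linear term from $d_A(\psi\pm\bar\psi)$ into the $(0,2)\oplus(2,0)$ summands, and the $\Lambda d_A(\psi\pm\bar\psi)$ contribution to the $\langle\omega\rangle$ summand is controlled by the identity $[\phi_A,\bar\psi] = 0$, which forces the cross terms $\langle\phi_A\omega,\psi\wedge\bar\psi\rangle$ to vanish. What remains is exactly $\pm 2\,\mathrm{Re}\langle F_A^{0,2}, \psi\wedge\psi\rangle \geq 0$, so both inequalities force equality. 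At this point Lemma \ref{standardcomp} applies and gives $e^*(\psi)F_A^{0,2} = 0$; unwinding the definition of $\psi$ via $\bar\Omega$ in components produces $[F_{st}^{0,2}, F_{ab}^{0,2}] = 0$ for all indices, so the components of $F_A^{0,2}$ generate a commutative subsheaf, which becomes a subbundle by the same analyticity argument used in Section \ref{4dim}.

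I expect the main obstacle to be the bookkeeping around the $\phi_A$ summand: one must show that $\nabla_A\phi_A = 0$ \emph{before} running the variational argument (so that the $\omega$-component contributions to $\|P^-F_{A+t(\psi\pm\bar\psi)}\|^2$ cancel), which in turn requires the subtle coupling of Bianchi and Yang--Mills in the Kähler setting. Once $\phi_A$ is parallel, the two variations are essentially forced and the rest is an application of the lemma.
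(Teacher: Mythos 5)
Your proposal is correct and follows essentially the same route as the paper: the same $\psi$ defined by $e^*(\psi)\bar\Omega = F_A^{0,2}$, the same derivation of $\nabla_A\phi_A=0$ and $\bar\partial_A\psi=\bar\partial_A^*\psi=0$ from Bianchi, Yang--Mills and the K\"ahler identities, the same conservative decomposition onto the $(2,0)\oplus\langle\omega\rangle\oplus(0,2)$ summands, and the same pair of variations $A+t(\psi+\bar\psi)$ and $A+it(\psi-\bar\psi)$ feeding into Lemma \ref{standardcomp}. One small correction: the extra $\|F_{A0}^{1,1}\|^2$ in your energy identity should not be there --- the correct identity expresses $\|F_A\|^2$ as a positive combination of $\|F_A^{0,2}\|^2$ and $\|\phi_A\omega\|^2$ plus the topological term $\int_M \mathrm{tr}(F_A\wedge F_A)\wedge\omega$ alone, and it is precisely the absence of any $F_{A0}^{1,1}$ contribution on the right-hand side that makes the decomposition conservative.
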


%%% Bibliography
%
%\bibliographystyle{kw}
 \newcommand{\etalchar}[1]{$^{#1}$}
\def\polhk#1{\setbox0=\hbox{#1}{\ooalign{\hidewidth
  \lower1.5ex\hbox{`}\hidewidth\crcr\unhbox0}}}
\providecommand{\bysame}{\leavevmode\hbox to3em{\hrulefill}\thinspace}

%\bibliography{kw} 
%
\end{document}